\newtheorem{definition}{\bfseries Definition}
\newtheorem{proposition}{\bfseries Proposition}
\newtheorem{example}{\bfseries Example}
\newtheorem{theorem}{\bfseries Theorem}
\newtheorem{lemma}{\bfseries Lemma}
\newtheorem{remark}{\bfseries Remark}
\newcommand{\vect}[1]{\boldsymbol{\mathbf{#1}}}
\newcommand{\mat}[1]{\begin{bmatrix} #1 \end{bmatrix}}
\newcommand{\R}{\mathbb{R}}
\newif\ifdraft
\title{Optimal Control for  Kinematic Bicycle Model with Continuous-time Safety Guarantees: A Sequential Second-order Cone Programming Approach}
\author{Victor Freire,  Xiangru Xu\thanks{Victor Freire and Xiangru Xu are with the Department of Mechanical Engineering, University of Wisconsin-Madison,
        Madison, WI, USA. Email: 
        {\tt\small \{victor.freiremelgizo,xiangru.xu\}@wisc.edu}.}}
\begin{document}
\maketitle
\thispagestyle{plain}
\pagestyle{plain}

\begin{abstract}
The optimal control problem for the kinematic bicycle model is considered where the trajectories are required to satisfy the safety constraints in the continuous-time sense. Based on the differential flatness property of the model, necessary and sufficient conditions in the flat space are provided to guarantee safety in the state space. The optimal control problem is relaxed to the problem of solving three second-order cone programs (SOCPs) sequentially, which find the safe path, the trajectory duration, and the speed profile, respectively.
Solutions of the three SOCPs together provide a  sub-optimal solution to the original optimal control problem. Simulation examples and comparisons with state-of-the-art optimal control solvers are presented to demonstrate the effectiveness of the proposed approach.
\end{abstract}

\section{Introduction}
The vehicle motion planning problem is well-studied. However, fast algorithms tend to lack formal safety guarantees while robust and safe planning methods are usually slow, making them unfit for real-time implementation. The search for a fast and safe planning algorithm is key to achieving provably safe driving autonomy \cite{shi_principles_2020,xu18correctness}.


The literature addressing the motion planning problem for vehicles is vast. 
A common approach is the spatio-temporal division of the problem. 
On the one hand, most pathfinding (i.e., spatial) algorithms parse the configuration space in search of minimum length or minimum curvature sequences or functions \cite{karaman2011sampling, elbanhawi2015randomized, zhang2018hybrid}. For example, in \cite{elbanhawi2015randomized}, the authors used RRT and B-spline curves to explore the space and generate kinodynamically feasible paths; 
however, their approach requires each RRT sampling step to verify kinodynamic feasibility, resulting in increased computation times. 
In \cite{zhang2018hybrid}, a multi-layer planning framework was proposed where  the pathfinding layer uses sampling techniques to modify a global path for obstacle avoidance;  however, they rely on nonconvex minimization of the path's curvature to enforce kinodynamic feasibility. On the other hand, speed profile optimization (i.e., temporal) algorithms usually focus on minimum time and maximum rider comfort while navigating a given path \cite{verscheure2009time, lipp2014minimum,zhu2015convex, liu2017speed}. For example, 
\cite{verscheure2009time} showed that a minimum-time objective function can be reformulated in terms of the path parameter, and the problem was  generalized  for certain classes of systems in \cite{lipp2014minimum}.  
In \cite{zhu2015convex}, the authors used the spatio-temporal separation to iteratively optimize the path and the speed profile; 
both optimization subproblems are convex but there is ambiguity in the stopping criteria.

\begin{figure}[!t]
\centering
\includegraphics[width=8.5cm]{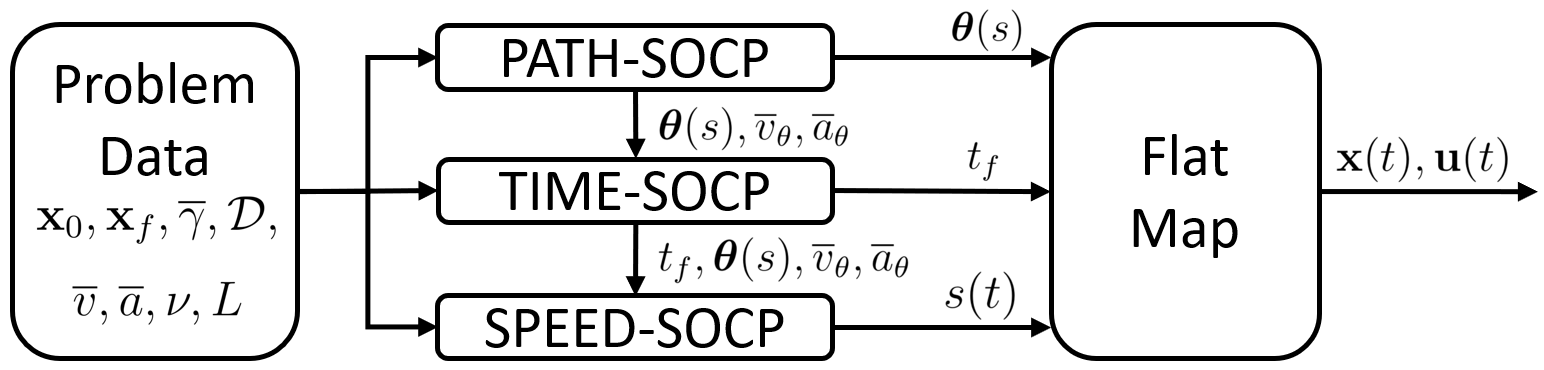}
\caption{Architecture of the sequential SOCP approach. \eqref{A-SOCP} finds a safe path $\vect{\theta}(s)$, \eqref{B-SOCP} finds an appropriate trajectory duration $t_f$, and \eqref{C-SOCP} finds a safe speed profile $s(t)$. These quantities parameterize the flat outputs which can be converted into a state-space trajectory $\vect{x}(t)$ and $\vect{u}(t)$ by the differential flatness property of the kinematic bicycle model, where  $\vect{x}(t)$ and $\vect{u}(t)$ is a guaranteed feasible point of \eqref{OPT-CTRL} by Theorem \ref{main}.}
\label{fig:framework}
\end{figure}


The kinematic bicycle model is a widely used model which captures the nonholonomic constraint present in actual vehicle dynamics and has been used in many optimal control problems for vehicle motion planning. 
For example,  \cite{polack2018guaranteeing}  studied the consistency of using the 3 DOF kinematic bicycle model for motion planning by comparing its results with a 9 DOF
model; 
\cite{kong2015kinematic} formulated an MPC problem based on the kinematic bicycle model, but the resulting problem is nonconvex and provides no safety guarantees in the continuous-time sense;  
\cite{liniger2017racing} demonstrated the high performance of stochastic MPC using the nonlinear bicycle model in a miniature racing environment, but their safety guarantees are given in the probabilistic sense.

In this work, we formulate the vehicle motion planning problem as an optimal control problem using the kinematic bicycle model. In order to ensure that the trajectories satisfy the safety constraints in the continuous-time sense, we propose necessary and sufficient conditions in flat space to guarantee safety in the state space,  based on the differential flatness property of the kinematic bicycle model.
We use spatio-temporal separation and the convexity properties of B-splines to relax the original optimal control problem into three sequential SOCPs yielding a path, a trajectory duration, and a speed profile. 
We show that the SOCP solutions constitute a suboptimal system trajectory to the original optimal control problem. Notably, SOCPs are a special type of convex optimization problem for which efficient solvers exist, which makes the proposed approach suitable for real-time and embedded applications.
Furthermore, the trajectories generated are proven to satisfy rigorously, in the continuous-time sense, the state and input safety constraints including maximum steering angle, position constraints, maximum velocity and maximum acceleration.

The remainder of the paper is organized as follows: Section \ref{sec:Preliminaries} describes the kinematic bicycle model, B-splines, second-order programming and introduces the optimal control problem considered; Section \ref{sec:Safety} provides necessary and sufficient conditions in flat space which guarantee safety in state space; Section \ref{sec:Convex} presents sufficient but convex relaxations to the previous conditions and formulates the three SOCPs; Section \ref{sec:Examples} demonstrates the proposed framework in examples and comparisons; Finally, Section \ref{sec:Conclusions} concludes the paper.

\section{Preliminaries \& Problem Statement} \label{sec:Preliminaries}
\subsection{Kinematic Bicycle  Model} \label{sec:Bike_Model}
The kinematic bicycle model is a commonly used, simple model which captures the nonholonomic constraint present in most wheeled vehicles, and it can be expressed as \cite{polack2017kinematic}: 
\begin{align}\label{dyn_model}
    \dot{\vect{x}}(t) &= f\big(\vect{x}(t)\big) + g\big(\vect{x}(t)\big)\vect{u}(t), 
\end{align}
where $f(\vect{x}) = \mat{v\cos\psi & v\sin\psi& 0&0}^T$, $g(\vect{x}) = \mat{\vect{0}_{2\times 2} & I_2}^T$,  $\vect{x} = (x, y, v, \psi)^T$ and $\vect{u} = (\dot{v}, \dot{\psi})^T$ are the state and input vectors, respectively,  $(x,y)$ is the position of the rear wheel, $v$ is the magnitude of the velocity vector and $\psi$ is the heading with respect to the inertial frame's $x$-axis (see Figure \ref{fig:coords}).
The front wheel   steering angle $\gamma$ is also a relevant quantity and is given by $\gamma = \arctan(L\dot{\psi}/ v)$, where $L>0$ is the wheelbase length.

The kinematic bicycle model \eqref{dyn_model} is a special case of the classical \emph{n-cart} system and is known to be \emph{differentially flat} \cite{rouchon1993flatness}. By choosing flat outputs as $\vect{y} = (x,y)^T$, the state and input can be expressed as functions of $\vect{y}$ and its derivatives:
\begin{equation}\label{flat_map}
    \vect{x}=\Phi(\vect{y},\dot{\vect{y}}), \quad 
    \vect{u} = \Psi(\vect{y},\dot{\vect{y}},\ddot{\vect{y}}),
\end{equation}
where the flat maps $\Phi$ and $\Psi$ are given as in \cite{rouchon1993flatness}:
\begin{equation*}
     v \!=\!\|\dot{\vect{y}}\|_2, \;
    \psi \!=\! \arctan(\dot{y}/\dot{x}), \;
    \dot{v} \!=\! \dot{\vect{y}}^T \ddot{\vect{y}}/v, \;
    \dot{\psi} \!=\!  (\ddot{y}\dot{x}\!-\!\ddot{x}\dot{y})/v^2.
\end{equation*}
Generating a trajectory for differentially flat systems reduces to finding a sufficiently smooth flat output trajectory \cite{van1998real}. In the case of system \eqref{dyn_model}, the flat trajectory $\vect{y}(t)$ needs to be at least twice-differentiable.
\begin{figure}[!hb]
    \centering
    \includegraphics[width=7cm]{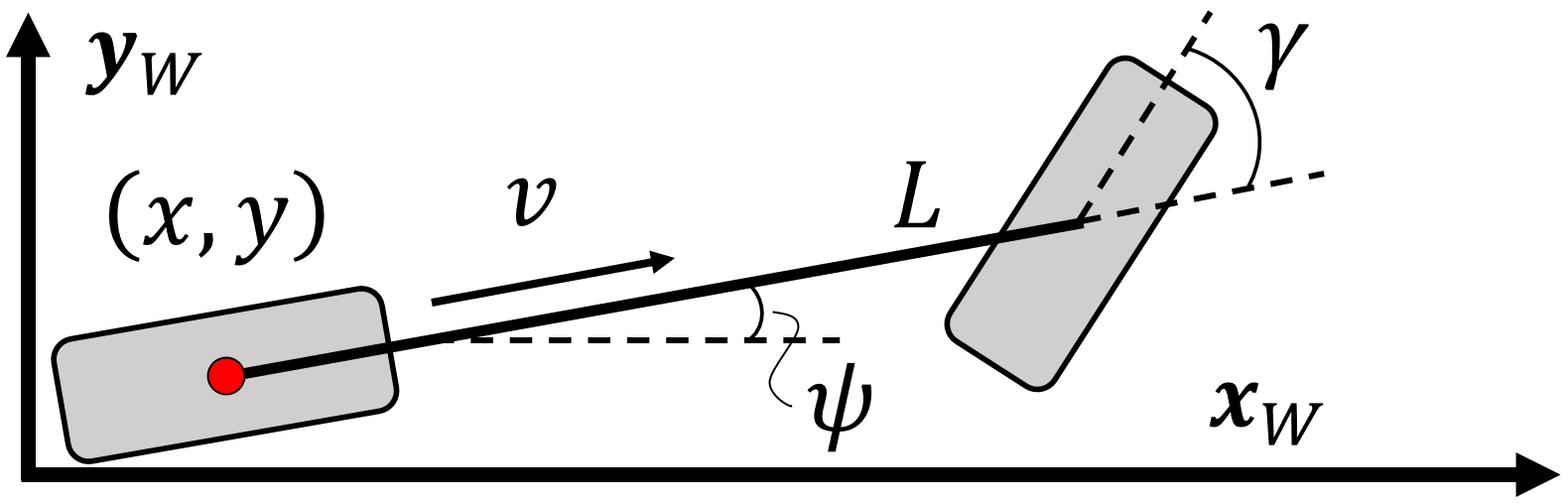}
    \caption{Kinematic bicycle model and world (inertial) coordinate frame.}
    \label{fig:coords}
\end{figure}

\subsection{B-Spline Curves} \label{sec:bsplines}



B-splines are common in trajectory generation \cite{elbanhawi2015randomized,  stoican2017constrained,freire2021flatness}. 
A $d$-th degree \emph{B-spline basis} $\lambda_{i,d}(t)$ with $d\in \mathbb{Z}_{>0}$ is defined over a given \emph{knot vector} $\vect{\tau} = (\tau_0, \ldots, \tau_{\nu})^T$ satisfying $\tau_i \leq \tau_{i+1}$ for $i = 0, \ldots, v-1$ and it can be computed recursively by the Cox-de Boor recursion formula \cite{de1978practical}.
Additionally, we consider the \emph{clamped, uniform B-spline basis}, which is defined over knot vectors satisfying:
\begin{subequations} \label{clamped_uniform}
\begin{align}
    \text{(clamped)} \quad &\tau_0=\ldots=\tau_d, \quad \tau_{\nu-d}=\ldots=\tau_{\nu},\label{clamped}\\
    \text{(uniform)}\quad &\tau_{d+1}-\tau_{d}=\ldots =\tau_{N+1}-\tau_{N},\label{uniform}
\end{align}
\end{subequations}
where $N = \nu-d-1$. A $d$-th degree \emph{B-spline curve} $\vect{s}(t)$ is a $m$-dimensional parametric curve  built by linearly combining \emph{control points} $\vect{p}_i\in\mathbb{R}^m (i=0,\ldots,N)$ and B-spline bases of the same degree. Noting $\vect{s}^{(0)}(t) = \vect{s}(t)$, we generate a B-spline curve and its $r$-th order derivative by:
\begin{equation}
    \vect{s}^{(r)}(t) = \sum_{i=0}^{N}\vect{p}_i\vect{b}_{r,i+1}^T\vect{\Lambda}_{d-r}(t) = PB_r\vect{\Lambda}_{d-r}(t),
    \label{bspline_curve}
\end{equation}
where the control points are grouped into a matrix $P = (\vect{p}_0,\dots, \vect{p}_N) \in\R^{m\times (N+1)}$,
the basis functions are grouped into a vector $\vect{\Lambda}_{d-r}(t) = \big(\lambda_{0,d-r}(t),\dots ,\lambda_{N+r,d-r}(t)\big)^T\in\R^{N+r+1}$, 
and $\vect{b}_{r,j}^T$ is the $j$-th row of a time-invariant matrix $B_{r}\in\mathbb{R}^{(N+1)\times(N+r+1)}$  constructed as $B_r = M_{d,d-r}C_r$,
where matrices $M_{d,d-r}\in\mathbb{R}^{(N+1)\times(N-r+1)}$ and $C_r\in\mathbb{R}^{(N-r+1)\times(N+r+1)}$ are defined in \cite{freire2021flatness,suryawan2012constrained}.

\begin{definition}\label{virtual_control_points} \emph{\cite{freire2021flatness}} The columns of $P^{(r)} \triangleq PB_{r}$ are called the \emph{$r$-th order virtual control points} (VCPs) of $\vect{s}^{(r)}(t)$ and denoted as $\vect{p}_i^{(r)}$ where $i=0,1,\ldots,N+r$, i.e., 
\begin{equation}\label{virtual_cp_eq}
    P^{(r)} = PB_{r} = \mat{\vect{p}_0^{(r)} & \dots & \vect{p}_{N+r}^{(r)}}.
\end{equation}
\end{definition}

B-splines have some nice properties such as continuity, convexity, and local support. The following result from \cite{freire2021flatness} ensures continuous-time set inclusion by using such properties.

\begin{proposition} \label{inclusion_prop}
\emph{\cite{freire2021flatness}} Given a convex set $\mathcal{S}$ and the $r$-th derivative of a clamped B-spline curve $\vect{s}^{(r)}(t)$ defined as in \eqref{bspline_curve}, if $\vect{p}_{j}^{(r)} \in \mathcal{S}$, with $j=r,\ldots,N$ holds, then $\vect{s}^{(r)}(t) \in \mathcal{S}, \ t\in[\tau_0,\tau_{\nu})$. Furthermore, if $\vect{p}^{(r)}_j \in S$, with $j=i-d+r,\dots,i$ and $i\in\{d,\ldots,N\}$ holds, then $\vect{s}^{(r)}(t)\in \mathcal{S}, \ t\in[\tau_i,\tau_{i+1})$.
\end{proposition}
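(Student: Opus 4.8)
The plan is to view $\vect{s}^{(r)}(t)$ as a B-spline curve in its own right and then invoke convexity. By Definition \ref{virtual_control_points} and \eqref{bspline_curve}, the $r$-th derivative is $\vect{s}^{(r)}(t) = P^{(r)}\vect{\Lambda}_{d-r}(t) = \sum_{j=0}^{N+r}\vect{p}_j^{(r)}\lambda_{j,d-r}(t)$, i.e. a degree-$(d-r)$ B-spline over the same knot vector $\vect{\tau}$ whose control points are the VCPs. The proof then rests on three standard properties of the basis functions $\lambda_{j,d-r}$: non-negativity, partition of unity, and local support (each $\lambda_{j,d-r}$ is supported on $[\tau_j,\tau_{j+d-r+1})$).

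The first step I would carry out is to dispose of the ``extra'' VCPs created by re-expressing a degree-$d$ object in a degree-$(d-r)$ basis. Because the curve is clamped, \eqref{clamped} gives the end knots multiplicity $d+1$, which exceeds the $d-r+1$ needed for a degree-$(d-r)$ basis. Consequently, for $j\in\{0,\dots,r-1\}\cup\{N+1,\dots,N+r\}$ the support $[\tau_j,\tau_{j+d-r+1})$ is degenerate on the parameter domain, so these basis functions vanish there and their VCPs drop out. On $[\tau_d,\tau_{N+1})$ one is left with $\vect{s}^{(r)}(t)=\sum_{j=r}^{N}\vect{p}_j^{(r)}\lambda_{j,d-r}(t)$, which already explains why only the indices $j=r,\dots,N$ appear in the hypothesis.

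Next I would prove the local (``Furthermore'') claim and obtain the global one as a corollary. Fix $i\in\{d,\dots,N\}$ and $t\in[\tau_i,\tau_{i+1})$; local support collapses the sum to the window $j=i-d+r,\dots,i$, and non-negativity together with partition of unity make the weights $\lambda_{j,d-r}(t)$ a convex combination. Hence $\vect{s}^{(r)}(t)$ lies in the convex hull of $\vect{p}_{i-d+r}^{(r)},\dots,\vect{p}_i^{(r)}$, so if these VCPs are in the convex set $\mathcal{S}$ then $\vect{s}^{(r)}(t)\in\mathcal{S}$. For the global claim I would cover $[\tau_0,\tau_\nu)$: the clamped end spans are empty, so $[\tau_0,\tau_\nu)=\bigcup_{i=d}^{N}[\tau_i,\tau_{i+1})$, and the union of the sliding windows $\{i-d+r,\dots,i\}$ over $i=d,\dots,N$ telescopes to $\{r,\dots,N\}$, matching the hypothesis exactly.

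The main obstacle is not the convexity argument, which is immediate once the convex-combination structure is in place, but the knot bookkeeping in the first step: one must confirm that the construction $B_r=M_{d,d-r}C_r$ really produces genuine degree-$(d-r)$ basis functions over $\vect{\tau}$ (so that the partition-of-unity property genuinely holds, rather than the sum-to-zero behavior that would result from naively differentiating the degree-$d$ basis), and that the over-clamped end knots annihilate exactly the first $r$ and last $r$ of them. Pinning down these index ranges for the degenerate end spans is the delicate part; thereafter convexity of $\mathcal{S}$ closes both claims.
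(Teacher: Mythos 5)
Your proof is correct: the paper itself does not prove Proposition \ref{inclusion_prop} but imports it from \cite{freire2021flatness}, and your argument---re-expressing $\vect{s}^{(r)}$ as a degree-$(d-r)$ B-spline whose control points are the VCPs, discarding the $2r$ end VCPs whose basis functions have degenerate support on the over-clamped knot vector, and then using non-negativity, partition of unity, and local support to exhibit $\vect{s}^{(r)}(t)$ as a convex combination over the window $\{i-d+r,\dots,i\}$---is the standard convex-hull-property proof that the cited reference relies on. The index bookkeeping you flag as the delicate point checks out: for degree $d-r$ the support $[\tau_j,\tau_{j+d-r+1})$ is empty precisely for $j\le r-1$ and for $j\ge N+1$, and partition of unity holds on all of $[\tau_0,\tau_\nu)=[\tau_{d-r},\tau_{N+r+1})$ because of the clamping, so both the global and the local claims follow exactly as you describe.
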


\subsection{Second-order Cone Constraints} \label{SOC_OPT} 
\emph{Second-order cone programs} (SOCPs) are  convex optimization problems of the following form \cite{alizadeh2003second, lobo1998applications}:
\begin{align} 
    \min \quad & \vect{f}^T\vect{x}\label{SOCP_form}\\
    \text{s.t.}\quad &\|A_i\vect{x}+\vect{b}_i\|_2\leq \vect{c}_i^T\vect{x}+d_i, \quad i=1,\ldots,m,\label{SOCP_form2}
\end{align}
where $\vect{x}\in\mathbb{R}^n$ is the decision variable, $\vect{f}\in\mathbb{R}^n$, $A_i\in\mathbb{R}^{(n_i-1)\times n}$, $\vect{b}_i\in\R^{n_i-1}$, $\vect{c}_i\in\R^{n}$ and $d_i\in\R$. The constraint shown in \eqref{SOCP_form2} is the \emph{second-order cone} (SOC) constraint. 
SOCPs can be seen as a generalization of more specialized types of convex optimization problems such as linear programs (LPs), quadratic programs (QPs) and convex, quadratically constrained quadratic programs (QCQPs) \cite{alizadeh2003second,lobo1998applications}. SOCPs can be solved in polynomial time by interior-point methods, and specialized SOCP solvers also exist \cite{mosek}.

\subsection{Problem Statement}
The motion planning problem of a car is formulated as the following constrained optimal control problem with variable horizon:
\begin{subequations}
\begin{align}  \label{OPT-CTRL}\tag{OPT}
    \min\limits_{\vect{x}(\cdot),\vect{u}(\cdot),t_f} \quad & \nu t_f + \int_{0}^{t_f} L(\vect{x}(t),\vect{u}(t)) \; \mathrm{d}t \\
     \text{s. t.}\quad & \dot{\vect{x}}(t) = f(\vect{x}(t)) + g(\vect{x}(t))\vect{u}(t),\label{OPT-CTRL1}\\
     &\vect{x}(0) = \vect{x}_0, \quad \vect{x}(t_f) = \vect{x}_f, \label{OPT-CTRL2}\\
    & 0\leq v(t) \leq \overline{v}, \quad \forall t\in[0,t_f], \label{OPT-CTRL3}\\
     &|\dot{v}(t)|\leq \overline{a},\quad \forall t\in[0,t_f], \label{OPT-CTRL4}\\
     &|\gamma(t)|\leq \overline{\gamma}, \quad \forall t \in[0,t_f], \label{OPT-CTRL5}\\
        & \vect{r}(t) \in \mathcal{D},\quad \forall t\in[0,t_f], \label{OPT-CTRL6}
\end{align}
\end{subequations}
where $f$ and $g$ are defined in the kinematic bicycle model \eqref{dyn_model}, $\vect{x}_0$ and $\vect{x}_f$ are given initial and final states, respectively, $\overline{v} > 0 $ is the speed limit, $\mathcal{D} \subseteq\mathbb{R}^2$ is the obstacle-free region, $\vect{r}\triangleq(x,y)^T$ is the position vector, $\overline{a}> 0$ is the maximum acceleration and braking and $0<\overline{\gamma}< \pi/2$ is the steering angle limit. The Lagrange cost functional $L:\mathbb{R}^{4}\times\mathbb{R}^2\rightarrow \mathbb{R}$ is chosen to promote smoothness properties for the trajectory, and the parameter $\nu > 0$ encodes the tradeoff between time-optimality and smoothness. The optimal control problem \eqref{OPT-CTRL} is generally non-convex and  computationally demanding to solve in real-time. Furthermore, constraints \eqref{OPT-CTRL3}-\eqref{OPT-CTRL6} are difficult to satisfy strictly in the continuous-time sense.

In this paper, we solve \eqref{OPT-CTRL} by proposing a sequential SOCP approach, which guarantees that the constraints \eqref{OPT-CTRL3}-\eqref{OPT-CTRL6} are rigorously satisfied in the continuous-time. 
While we won't compromise on safety (feasibility), we will trade optimality for an increase in computational efficiency. Our approach leverages the differential flatness property of the bicycle model and parameterizes flat outputs using a pair of convoluted B-spline curves whose convexity properties will allow us to verify such constraints formally in continuous-time. We consider a separation between space ($\mathbb{R}^2$) and time to first find a \emph{path} with desirable properties; then, we use these properties to find a \emph{speed profile} for navigating it. Convoluting the \emph{path} with its \emph{speed profile} results in the flat output trajectory required to recover the state-space trajectory. 
\section{Safety Constraints Satisfaction in Flat Space} \label{sec:Safety}
In this section, we provide necessary and sufficient conditions  on the flat output trajectory $\vect{y}(t)$ that can guarantee continuous-time safety in the state-space. 




Consider a \emph{path} $\vect{\theta}(s)\triangleq \big(x(s),y(s)\big)^T\in C^2: [0,1]\rightarrow \R^2$ and a \emph{speed profile} $s(t)\in C^2: [0,t_f]\rightarrow[0,1]$, where $C^2$ is the set of functions whose derivatives, up to $2$nd order, exist and are continuous. The path and speed profile completely define the flat output and its derivatives \cite{pedrosa2003point}:
\begin{subequations}
\begin{align} \label{flat_parameterization}
    \vect{y}(t) &= \vect{\theta}\big(s(t)\big), \\
    \dot{\vect{y}}(t) &= \dot{s}(t)\vect{\theta}'\big(s(t)\big), \label{flat_parameterization_dot}\\
    \ddot{\vect{y}}(t) &= \ddot{s}(t)\vect{\theta}'\big(s(t)\big) + \dot{s}^2(t)\vect{\theta}''\big(s(t)\big), \label{flat_parameterization_ddot}
\end{align}
\end{subequations}
where $\vect{\theta}'\big(s(t)\big)$ denotes differentiation of $\vect{\theta}$ with respect to $s$ and taking values at $s(t)$, and similarly for $\vect{\theta}''\big(s(t)\big)$. 

\begin{remark}
The parameterization shown in \eqref{flat_parameterization} provides a number of benefits. First, the map \eqref{flat_map} has singularities if $\|\dot{\vect{y}}(t)\|_2 = 0$ for some $t$ (i.e., $\dot{x}(t)=\dot{y}(t)=0$); however, the convoluted parameterization shown in \eqref{flat_parameterization} allows one to avoid the singularity even in zero-speed situations \cite{martin2003flat}. 
Second, we can now consider the safety of the path $\vect{\theta}$, such as obstacle avoidance and steering angle constraints,  independently of the speed profile $s$ chosen later; that is, the parameterization makes spatial constraints independent from temporal constraints.
\end{remark}
In the following, we will overload the notation of the flat map \eqref{flat_map} as $\Phi(t)  \triangleq  \Phi\big(\vect{y}(t),\dot{\vect{y}}(t)\big) = \vect{x}(t)$, $\Psi(t) \triangleq \Psi\big(\vect{y}(t),\dot{\vect{y}}(t),\ddot{\vect{y}}(t)\big) = \vect{u}(t)$, 
where $\vect{y}(t)$ and its derivatives are parameterized in \eqref{flat_parameterization}-\eqref{flat_parameterization_ddot}.
\subsection{Path Safety}
Define the \emph{steering angle safety set} and \emph{drivable safety set}, respectively, as:
\begin{align}
\mathcal{S}_{\gamma} &\triangleq  \{(\vect{x},\vect{u}) \in \mathbb{R}^{4}\times \mathbb{R}^2: |\gamma|\leq\overline{\gamma}< \pi/2\}, \label{safe_set_gamma}\\
\mathcal{S}_{D} &\triangleq  \{\vect{x} \in \mathbb{R}^{4}: \vect{r}=(x,y)^T\in \mathcal{D}\}, \label{safe_set_obs}
\end{align}
with $\gamma$ the steering angle and $\mathcal{D} \subseteq \mathbb{R}^2$ the obstacle-free space.

\begin{lemma}
\label{ncvx_gamma}
The state-space trajectory $\big(\Phi(t),\Psi(t)\big) \in \mathcal{S}_{\gamma}$ for all $t\in[0,t_f]$ if and only if
\begin{equation}
\label{ncvx_gamma_cst}
    \|\vect{\theta}'(s)\times\vect{\theta}''(s)\|_2/\|\vect{\theta}'(s)\|_2^3 \leq \tan\overline{\gamma}/L, \quad \forall s\in[0,1].
\end{equation}
\end{lemma}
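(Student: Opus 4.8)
The plan is to show that the steering constraint, which enters through the flat map as a relation among the time derivatives of $\vect{y}$, collapses to a purely geometric condition on the path $\vect{\theta}$ once the parameterization \eqref{flat_parameterization}--\eqref{flat_parameterization_ddot} is substituted; indeed, the left-hand side of \eqref{ncvx_gamma_cst} is exactly the curvature of $\vect{\theta}$, and $\tan\overline{\gamma}/L$ is the maximal admissible curvature. First I would remove the $\arctan$: since $0<\overline{\gamma}<\pi/2$ and $\tan$ is strictly increasing on $(-\pi/2,\pi/2)$, the membership $(\Phi(t),\Psi(t))\in\mathcal{S}_{\gamma}$, i.e. $|\gamma(t)|\le\overline{\gamma}$, is equivalent to $|\tan\gamma(t)|\le\tan\overline{\gamma}$. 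Using $\gamma=\arctan(L\dot{\psi}/v)$ together with the flat-map formulas $v=\|\dot{\vect{y}}\|_2$ and $\dot{\psi}=(\ddot{y}\dot{x}-\ddot{x}\dot{y})/v^2$, this reads $L\,\|\dot{\vect{y}}\times\ddot{\vect{y}}\|_2/\|\dot{\vect{y}}\|_2^{3}\le\tan\overline{\gamma}$, where for planar vectors $\vect{a}\times\vect{b}\triangleq a_1b_2-a_2b_1$.

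Next I would substitute the convoluted parameterization. Writing $\dot{\vect{y}}=\dot{s}\,\vect{\theta}'$ and $\ddot{\vect{y}}=\ddot{s}\,\vect{\theta}'+\dot{s}^2\vect{\theta}''$ (all evaluated at $s(t)$) and exploiting the bilinearity and antisymmetry of the planar cross product, in particular $\vect{\theta}'\times\vect{\theta}'=0$, gives $\dot{\vect{y}}\times\ddot{\vect{y}}=\dot{s}^{3}\,(\vect{\theta}'\times\vect{\theta}'')$, while $\|\dot{\vect{y}}\|_2=|\dot{s}|\,\|\vect{\theta}'\|_2$. Hence $\|\dot{\vect{y}}\times\ddot{\vect{y}}\|_2/\|\dot{\vect{y}}\|_2^3=\|\vect{\theta}'\times\vect{\theta}''\|_2/\|\vect{\theta}'\|_2^{3}$: the factor $|\dot{s}|^3$ cancels and the speed profile disappears entirely, which is precisely the spatial/temporal decoupling anticipated in the Remark. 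The pointwise inequality at time $t$ therefore becomes $\|\vect{\theta}'(s(t))\times\vect{\theta}''(s(t))\|_2/\|\vect{\theta}'(s(t))\|_2^3\le\tan\overline{\gamma}/L$, which depends on $t$ only through $s(t)$.

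Finally I would convert the quantifier over $t$ into a quantifier over $s$. Since $s(\cdot)$ is a continuous speed profile traversing the path from $s=0$ to $s=1$, its image is all of $[0,1]$ by the intermediate value theorem; thus requiring the inequality for every $t\in[0,t_f]$ is equivalent to requiring it for every $s\in[0,1]$, which is \eqref{ncvx_gamma_cst}. (Surjectivity is what the forward implication uses; the reverse follows by evaluating \eqref{ncvx_gamma_cst} at $s=s(t)$.) The main obstacle is the zero-speed situation $\dot{s}(t)=0$, where $v=\|\dot{\vect{y}}\|_2=0$ and both the flat-map expression for $\dot{\psi}$ and the ratio $\|\dot{\vect{y}}\times\ddot{\vect{y}}\|_2/\|\dot{\vect{y}}\|_2^3$ take the indeterminate form $0/0$. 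Here $\gamma$ must be read as the geometric steering angle $\arctan\!\big(L\,\|\vect{\theta}'\times\vect{\theta}''\|_2/\|\vect{\theta}'\|_2^3\big)$, i.e. the continuous extension obtained from the path curvature; this is well defined under the standing regularity assumption $\|\vect{\theta}'(s)\|_2\neq 0$, and it coincides with the limit of the cancelled expression as $\dot{s}\to0$, so the equivalence persists through zero-speed instants.
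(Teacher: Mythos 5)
Your proof is correct and follows essentially the same route as the paper's: remove the $\arctan$ by monotonicity of $\tan$ on $(-\pi/2,\pi/2)$, substitute the parameterization \eqref{flat_parameterization_dot}--\eqref{flat_parameterization_ddot} into $\gamma=\arctan(L\dot{\psi}/v)$ so that the $\dot{s}$ factors cancel and $\gamma$ depends only on the path, then pass from $t\in[0,t_f]$ to $s\in[0,1]$. You are in fact more careful than the paper at two points it glosses over --- the surjectivity of $s(\cdot)$ needed for the ``only if'' direction and the continuous extension of $\gamma$ at zero-speed instants --- both of which are handled correctly.
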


\begin{proof}
It can be shown from the expression of $\gamma$ given in Section \ref{sec:Bike_Model} and the considered parameterization \eqref{flat_parameterization} that $\gamma(t) = \arctan\big({L(y''\small(s\small)x'\small(s\small) - x''\small(s\small)y'\small(s\small))}/\|\vect{\theta}'\small(s\small)\|_2^3\big)$. 
Furthermore, over the range $\gamma\in(-\pi/2,\pi/2)$, $\tan|\gamma| = |\tan\gamma|$. Therefore, $\tan|\gamma|/L = {\|\vect{\theta}'(s)\times\vect{\theta}''(s)\|_2}/{\|\vect{\theta}'(s)\|_2^3}$.
Observing that $
\tan(|\gamma|) \leq\tan(\overline{\gamma}), \gamma \in (-\pi/2,\pi/2)\iff|\gamma|\leq \overline{\gamma}<\pi/2$,
the conclusion follows immediately.
\end{proof}

\begin{lemma}\label{ncvx_obs}
The state-space trajectory $\Phi(t)\in\mathcal{S}_D$ for all $t\in[0,t_f]$ if and only if
\begin{equation}\label{ncvx_obs_cst}
    \vect{\theta}(s) \in \mathcal{D}, \quad \forall s\in[0,1]. 
\end{equation}
\end{lemma}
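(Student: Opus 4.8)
The plan is to exploit the fact that the position vector $\vect{r}(t)$ coincides exactly with the flat output, so that the drivable-set condition on the full state reduces directly to a condition on the path $\vect{\theta}$ evaluated along the speed profile. First I would observe that, from the flat map \eqref{flat_map} and the parameterization \eqref{flat_parameterization}, the first two components of $\Phi(t)=\vect{x}(t)$ are precisely $\vect{y}(t)=\vect{\theta}\big(s(t)\big)$, i.e., $\vect{r}(t)=\vect{\theta}\big(s(t)\big)$. Hence, by the definition of $\mathcal{S}_D$ in \eqref{safe_set_obs}, the membership $\Phi(t)\in\mathcal{S}_D$ holds if and only if $\vect{\theta}\big(s(t)\big)\in\mathcal{D}$. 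The statement therefore becomes the equivalence between ``$\vect{\theta}\big(s(t)\big)\in\mathcal{D}$ for all $t\in[0,t_f]$'' and ``$\vect{\theta}(s)\in\mathcal{D}$ for all $s\in[0,1]$'', that is, between the image of $\vect{\theta}$ over the realized parameter values and its image over the entire domain $[0,1]$.

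For the sufficiency direction ($\Leftarrow$) the argument is immediate: since the speed profile is defined as a map into $[0,1]$, we have $s(t)\in[0,1]$ for every $t\in[0,t_f]$; applying the hypothesis $\vect{\theta}(s)\in\mathcal{D}$ for all $s\in[0,1]$ at the particular value $s=s(t)$ yields $\vect{\theta}\big(s(t)\big)\in\mathcal{D}$ for each $t$, hence $\Phi(t)\in\mathcal{S}_D$.

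The necessity direction ($\Rightarrow$) is where the single point of care lies: I must ensure that the realized parameter values $\{\,s(t):t\in[0,t_f]\,\}$ exhaust the whole interval $[0,1]$, so that safety along the trajectory transfers to safety over the entire path domain. This surjectivity follows from the boundary values $s(0)=0$ and $s(t_f)=1$ of the speed profile (so that the trajectory traverses the full path, consistent with $\vect{x}(0)=\vect{x}_0$ and $\vect{x}(t_f)=\vect{x}_f$) together with the continuity of $s\in C^2$: by the intermediate value theorem, for any target $\bar s\in[0,1]$ there is some $\bar t\in[0,t_f]$ with $s(\bar t)=\bar s$. Then $\vect{\theta}(\bar s)=\vect{\theta}\big(s(\bar t)\big)=\vect{r}(\bar t)\in\mathcal{D}$ by hypothesis, and since $\bar s$ was arbitrary we conclude $\vect{\theta}(s)\in\mathcal{D}$ for all $s\in[0,1]$. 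The main (and essentially only) obstacle is this surjectivity step; everything else is a direct substitution built on the identity $\vect{r}(t)=\vect{\theta}\big(s(t)\big)$, which is why I would foreground that identity at the very start of the proof.
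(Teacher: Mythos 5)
Your proof is correct and follows the same substitution-based route as the paper's: the identity $\vect{r}(t)=\vect{\theta}\big(s(t)\big)$ together with the fact that $s$ maps $[0,t_f]$ into $[0,1]$. You are in fact more careful than the paper on the necessity direction — the paper's two-line proof tacitly assumes that the realized values $\{s(t)\}$ cover all of $[0,1]$, whereas you make this surjectivity explicit via the boundary values $s(0)=0$, $s(t_f)=1$ and the intermediate value theorem, conditions the paper only imposes later (in \eqref{C-SOCP} and the proof of Theorem~\ref{main}); without them the ``only if'' claim would not hold as stated.
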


\begin{proof}
Recall that the speed profile $s\in C^2:[0,t_f]\rightarrow[0,1]$. Thus, the condition $\vect{y}(t) = \vect{\theta}(s(t))\in\mathcal{D}$ must hold for all $t\in[0,t_f]$. The conclusion follows by the definition of $\mathcal{S}_D$.
\end{proof}

\subsection{Speed Profile Safety}
Define the \emph{forward speed safety set} and \emph{linear acceleration safety set}, respectively, as:
\begin{align}
     \mathcal{S}_{v} &\triangleq \{\vect{x}\in\mathbb{R}^4: 0 \leq v \leq \overline{v}\}, \label{safe_set_v}\\
     \mathcal{S}_{\dot{v}} &\triangleq \{\vect{u}\in\mathbb{R}^2: |\dot{v}| \leq \overline{a}\}, \label{safe_set_v_dot}
\end{align}
with $\overline{v}$ and $\overline{a}$ the speed and acceleration bounds, respectively.

\begin{lemma} \label{ncvx_v}
Let $\vect{\theta}$ be a path. The state-space trajectory $\Phi(t) \in \mathcal{S}_v$ for all $t\in[0,t_f]$ if and only if 
    \begin{equation}\label{ncvx_v_cst}
        \dot{s}(t) \geq 0 , \quad \dot{s}(t)\|\vect{\theta}(s(t))\|_2 \leq \overline{v}, \quad \forall t \in [0,t_f].
    \end{equation}
\end{lemma}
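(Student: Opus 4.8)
The plan is to substitute the flat parameterization \eqref{flat_parameterization_dot} into the flat-map expression for the forward speed $v$ and then decompose the two-sided bound $0\leq v\leq\overline{v}$ defining $\mathcal{S}_v$ into a sign condition and a magnitude condition. Because membership $\Phi(t)\in\mathcal{S}_v$ is a pointwise-in-time property and the parameterization is explicit, I expect a clean pointwise equivalence; it therefore suffices to argue the equivalence at an arbitrary fixed $t$ and then quantify over $[0,t_f]$.

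First I would recall from \eqref{flat_map} that $v(t)=\|\dot{\vect{y}}(t)\|_2$ and substitute \eqref{flat_parameterization_dot}, $\dot{\vect{y}}(t)=\dot{s}(t)\vect{\theta}'(s(t))$, to obtain $v(t)=|\dot{s}(t)|\,\|\vect{\theta}'(s(t))\|_2$. By the definition of $\mathcal{S}_v$, membership holds precisely when $0\leq v(t)\leq\overline{v}$.

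Next I would split this into its two defining inequalities. The upper bound $v(t)\leq\overline{v}$ is immediate. The lower bound $v(t)\geq 0$ is where the sign condition enters: for the recovered state to represent forward travel, the velocity $\dot{\vect{y}}=\dot{s}\vect{\theta}'$ must be aligned with the path tangent $\vect{\theta}'$ rather than anti-aligned, which forces $\dot{s}(t)\geq 0$. Under this sign condition $|\dot{s}(t)|=\dot{s}(t)$, so $v(t)=\dot{s}(t)\|\vect{\theta}'(s(t))\|_2$ and the upper bound reads $\dot{s}(t)\|\vect{\theta}'(s(t))\|_2\leq\overline{v}$, which is the speed bound in \eqref{ncvx_v_cst} (with $\vect{\theta}'$ the path tangent). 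The converse is immediate substitution: given $\dot{s}(t)\geq 0$ and the speed bound, $v(t)=\dot{s}(t)\|\vect{\theta}'(s(t))\|_2\in[0,\overline{v}]$.

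The main obstacle is the sign/direction bookkeeping rather than any calculation. Since the flat map returns the magnitude $\|\dot{\vect{y}}\|_2\geq 0$, the nonnegativity $v\geq 0$ is vacuous if read off the norm alone; the genuine content of $\dot{s}\geq 0$ is that the parameterization traverses $\vect{\theta}$ in the forward direction rather than in reverse. To make this rigorous I would invoke the standing regularity assumption $\|\vect{\theta}'(s)\|_2>0$ for all $s\in[0,1]$, so that the sign of the signed speed $\dot{s}(t)\|\vect{\theta}'(s(t))\|_2$ is determined by the sign of $\dot{s}(t)$ alone; the degenerate case $\|\vect{\theta}'(s)\|_2=0$, where $v=0$ regardless of $\dot{s}$, would be treated separately.
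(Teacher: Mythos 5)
Your proof is correct and follows the same route as the paper's one-line proof: substitute $\dot{\vect{y}}(t)=\dot{s}(t)\,\vect{\theta}'(s(t))$ from \eqref{flat_parameterization_dot} into the flat-map expression for $v$ and read off the two inequalities defining $\mathcal{S}_v$ pointwise in $t$. You also correctly identify and resolve the two points the paper glosses over --- that $\|\vect{\theta}(s(t))\|_2$ in \eqref{ncvx_v_cst} should be $\|\vect{\theta}'(s(t))\|_2$, and that the ``only if'' half requires the signed-speed convention $v=\dot{s}\|\vect{\theta}'(s)\|_2$ (the one the paper itself uses in the proof of Lemma~\ref{ncvx_v_dot}) together with the regularity $\|\vect{\theta}'(s)\|_2>0$, since reading $v=\|\dot{\vect{y}}\|_2$ literally makes the lower bound vacuous and would not force $\dot{s}\geq 0$.
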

\begin{proof}
The conclusion follows from the flat map $\eqref{flat_map}$ describing the state $v$ and the parameterization of $\dot{\vect{y}}(t) $ given in \eqref{flat_parameterization_dot}.
\end{proof}

\begin{lemma} \label{ncvx_v_dot}
Let $\vect{\theta}$ be a path. The input trajectory $\Psi(t)\in \mathcal{S}_{\dot{v}}$ for all $t\in [0,t_f]$ if and only if
\begin{equation} \label{ncvx_v_dot_cst}
|a_t(t) + a_n(t)|\leq \overline{a}, \quad \forall t\in[0,t_f],
\end{equation}
where $a_t\triangleq \ddot{s}\|\vect{\theta}'(s)\|_2$ and $a_n \triangleq \dot{s}^2\big(\vect{\theta}'(s)\cdot \vect{\theta}'' (s)\big)/\|\vect{\theta}'(s)\|_2$.
\end{lemma}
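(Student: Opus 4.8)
The plan is to prove the equivalence by establishing the pointwise algebraic identity $\dot{v}(t) = a_t(t) + a_n(t)$ and then invoking the definition of $\mathcal{S}_{\dot{v}}$, in the same spirit as the proof of Lemma~\ref{ncvx_v}. The direct route substitutes the parameterizations \eqref{flat_parameterization_dot}--\eqref{flat_parameterization_ddot} into the flat-map expression $\dot v = \dot{\vect y}^{T}\ddot{\vect y}/v$. Computing the numerator gives $\dot{\vect y}^{T}\ddot{\vect y} = \dot s\,\ddot s\,\|\vect\theta'\|_2^2 + \dot s^{3}(\vect\theta'\cdot\vect\theta'')$, and with $v=\dot s\,\|\vect\theta'\|_2$ (using $\dot s\ge 0$ as in Lemma~\ref{ncvx_v}), dividing yields $\dot v = \ddot s\,\|\vect\theta'\|_2 + \dot s^{2}(\vect\theta'\cdot\vect\theta'')/\|\vect\theta'\|_2 = a_t + a_n$.

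The main obstacle with this route is the singularity flagged in the Remark: the flat map is undefined whenever $v=0$, i.e.\ $\dot s=0$, so dividing by $v$ is not licensed at zero-speed instants and the identity is only justified on $\{t:\dot s(t)>0\}$. To cover all of $[0,t_f]$, I would instead obtain $\dot v$ by differentiating $v(t)=\dot s(t)\,\|\vect\theta'(s(t))\|_2$ directly in $t$, which is legitimate under the standing regularity of a path ($\vect\theta'\neq 0$) and never divides by $\dot s$. The product rule gives $\dot v = \ddot s\,\|\vect\theta'\|_2 + \dot s\,\tfrac{d}{dt}\|\vect\theta'\|_2$, and the chain rule gives $\tfrac{d}{dt}\|\vect\theta'\|_2 = \dot s\,(\vect\theta'\cdot\vect\theta'')/\|\vect\theta'\|_2$; combining recovers $\dot v = a_t + a_n$ on all of $[0,t_f]$, including zero-speed points.

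With the identity in hand, the equivalence is immediate. Since $\Psi(t)=\vect u(t)$ has $\dot v(t)$ as its first component, the statement $\Psi(t)\in\mathcal S_{\dot v}$ for all $t\in[0,t_f]$ means precisely $|\dot v(t)|\le\overline a$ for all $t$, which by the identity is exactly $|a_t(t)+a_n(t)|\le\overline a$ for all $t$; both directions follow because the identity is an equality rather than a one-sided bound. The only hypothesis I would make explicit is $\vect\theta'(s)\neq 0$ (non-degeneracy of the path), which keeps $a_n$ well defined and matches the regularity already relied upon in Lemmas~\ref{ncvx_gamma} and \ref{ncvx_v}.
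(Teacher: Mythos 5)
Your proof is correct and follows essentially the same route as the paper's: the paper likewise obtains the identity $\dot v(t)=a_t(t)+a_n(t)$ by differentiating $v(t)=\dot s(t)\,\|\vect\theta'(s(t))\|_2$ in time and then invokes the definition of $\mathcal S_{\dot v}$. Your additional remarks --- rejecting the division-by-$v$ route because of the zero-speed singularity and making the non-degeneracy hypothesis $\vect\theta'(s)\neq 0$ explicit --- are sound refinements of the same argument rather than a different one.
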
  

\begin{proof}
Differentiating $v(t) = \dot{s}(t)\|\vect{\theta}'\big(s(t)\big)\|_2$ with respect to time we have $\dot{v}(t) = a_t(t) + a_n(t)$. The conclusion follows immediately by the definition of $\mathcal{S}_{\dot{v}}$.
\end{proof}

\subsection{Flattened Optimal Control Problem}
Consider now the following functional optimization problem:
\begin{align} \label{FLAT-OPT}
    \min\limits_{\vect{\theta}(\cdot), s(\cdot), t_f} \quad &  \nu t_f + \int_{0}^{t_f} L\big(\Phi(t),\Psi(t)\big) \; \mathrm{d}t, \tag{FLAT-OPT} \\
    \text{s. t.} \quad & \Phi(0) = \vect{x}_0, \quad \Phi(t_f) = \vect{x}_{f}, \nonumber\\ \quad & \text{\eqref{ncvx_gamma_cst}, \eqref{ncvx_obs_cst}, \eqref{ncvx_v_cst} and \eqref{ncvx_v_dot_cst} hold}, \nonumber
\end{align}
where $\vect{\theta}:[0,1]\rightarrow\mathbb{R}^2$ and $s:[0,t_f]\rightarrow [0,1]$.
\begin{proposition} \label{equiv_cor}
If the duration $t_f$, \emph{path} $\vect{\theta}$ and \emph{speed profile} $s$ is a solution of \eqref{FLAT-OPT}, then the corresponding trajectory given by $\vect{x}(t) = \Phi(t)$ and $\vect{u}(t) = \Psi(t)$ is a solution of \eqref{OPT-CTRL}.
\end{proposition}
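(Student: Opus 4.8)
The plan is to show that the flat parameterization sets up a cost-preserving correspondence between the feasible sets of \eqref{FLAT-OPT} and \eqref{OPT-CTRL}, so that a minimizer of the former is carried to a minimizer of the latter. I would start with the trivial observation that the two objectives agree along this correspondence: since $\vect{x}(t)=\Phi(t)$ and $\vect{u}(t)=\Psi(t)$ by construction, the integrand $L\big(\Phi(t),\Psi(t)\big)$ coincides with $L\big(\vect{x}(t),\vect{u}(t)\big)$ and the terminal term $\nu t_f$ is shared, so the cost of a flat triple $(\vect{\theta},s,t_f)$ equals the cost of the mapped trajectory $(\vect{x},\vect{u},t_f)$.

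Next I would prove the forward inclusion: every feasible triple of \eqref{FLAT-OPT} maps to a feasible triple of \eqref{OPT-CTRL}. The dynamic constraint \eqref{OPT-CTRL1} holds automatically because $\Phi$ and $\Psi$ are the flat maps \eqref{flat_map}, which by differential flatness satisfy $\dot{\vect{x}}=f(\vect{x})+g(\vect{x})\vect{u}$ identically for any twice-differentiable flat output; the boundary conditions \eqref{OPT-CTRL2} are imposed verbatim in \eqref{FLAT-OPT}. The remaining state and input constraints then transfer one-to-one through the four lemmas: the ``if'' direction of Lemma \ref{ncvx_v} yields \eqref{OPT-CTRL3}, Lemma \ref{ncvx_v_dot} yields \eqref{OPT-CTRL4}, Lemma \ref{ncvx_gamma} yields \eqref{OPT-CTRL5}, and Lemma \ref{ncvx_obs} yields \eqref{OPT-CTRL6}. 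Hence the mapped trajectory is feasible for \eqref{OPT-CTRL} at the same cost, which already shows the optimal value of \eqref{OPT-CTRL} is no larger than that of \eqref{FLAT-OPT}.

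The crux is the reverse inclusion, which is needed to force equality of the two optimal values. Here I would take an arbitrary feasible $(\vect{x},\vect{u},t_f)$ of \eqref{OPT-CTRL}, recover its flat output $\vect{y}(t)=\big(x(t),y(t)\big)^T$, and decompose it into a path and a speed profile, for instance by the linear reparameterization $s(t)=t/t_f$ together with $\vect{\theta}(\sigma)=\vect{y}(\sigma t_f)$, which furnishes a valid $C^2$ pair satisfying $\vect{y}(t)=\vect{\theta}\big(s(t)\big)$ and reproducing \eqref{flat_parameterization}. The ``only if'' directions of the same four lemmas then certify that \eqref{ncvx_gamma_cst}, \eqref{ncvx_obs_cst}, \eqref{ncvx_v_cst}, and \eqref{ncvx_v_dot_cst} hold, so this flat pair is feasible for \eqref{FLAT-OPT} at the same cost, giving the reverse inequality between the optimal values. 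Combining the two inequalities shows the optimal values are equal; since a minimizer of \eqref{FLAT-OPT} maps (by the forward direction) to a feasible point of \eqref{OPT-CTRL} attaining this common value, that point is a minimizer of \eqref{OPT-CTRL}.

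I expect the main obstacle to reside in the reverse decomposition rather than in the lemma bookkeeping. One must ensure the path/speed-profile split is genuinely $C^2$ and well defined even at instants where $\|\dot{\vect{y}}\|_2=0$, i.e.\ at the singularity of the flat map noted in the remark following \eqref{flat_parameterization}; the convoluted parameterization is precisely what side-steps this, but the argument should make explicit the regularity of $\vect{u}$ (equivalently, the twice-differentiability of $\vect{y}$) needed to guarantee $\vect{\theta}\in C^2$. A secondary point is that the decomposition is non-unique, so the claim to be proven is merely the \emph{existence} of one admissible split, which the reparameterization $s(t)=t/t_f$ supplies cleanly.
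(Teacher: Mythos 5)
Your proposal is correct in its overall logic and is in fact \emph{more} complete than the paper's own proof, but it takes a genuinely different route. The paper's proof consists only of what you call the forward inclusion: it checks the boundary conditions, invokes Lemmas \ref{ncvx_gamma}--\ref{ncvx_v_dot} to transfer the safety constraints, notes that the dynamics \eqref{OPT-CTRL1} hold automatically by flatness and the $C^2$ smoothness of $\vect{\theta}$ and $s$, and observes that the two objective functionals coincide; the step from ``feasible with equal cost'' to ``optimal'' is left implicit, resting on the cited observation of Ross and Fahroo that flat optimal control problems and their flat-space reformulations are equivalent. You correctly identify that this last step is the crux and supply the missing half explicitly, via the reverse decomposition $s(t)=t/t_f$, $\vect{\theta}(\sigma)=\vect{y}(\sigma t_f)$, which does reproduce \eqref{flat_parameterization}--\eqref{flat_parameterization_ddot} and, through the ``only if'' directions of the four lemmas, shows the optimal value of \eqref{FLAT-OPT} is no larger than that of \eqref{OPT-CTRL}. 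Two caveats you already flag deserve emphasis: (i) $\vect{\theta}\in C^2$ requires $\vect{y}\in C^2$, i.e.\ continuity of $\vect{u}$ along the candidate \eqref{OPT-CTRL} trajectory, a regularity assumption on the admissible control class that the paper never states; and (ii) your linear reparameterization has $\dot{s}\equiv 1/t_f>0$, so $\|\vect{\theta}'(s)\|_2=t_f\,v(t)$ vanishes wherever $v$ does, meaning it does \emph{not} exploit the singularity-avoidance of the convoluted parameterization, and trajectories with interior stops would need a different (e.g.\ arc-length-based) split. What your approach buys is a genuine proof of optimality transfer; what the paper's buys is brevity, at the cost of leaving the reverse inclusion to a citation.
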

\begin{proof}
The constraints on the initial and final states hold: $\Phi(0) = \vect{x}(0) = \vect{x}_0$ and $\Phi(t_f) = \vect{x}(t_f) = \vect{x}_f$.
By the definition of each safety set $\mathcal{S}_{\gamma}, \mathcal{S}_D,  \mathcal{S}_v$ and $\mathcal{S}_{\dot{v}}$ and by Lemmas \ref{ncvx_gamma}, \ref{ncvx_obs}, \ref{ncvx_v} and \ref{ncvx_v_dot}, it follows that the state-space trajectory $\vect{x}(t)$ and $\vect{u}(t)$ satisfies all safety constraints in \eqref{OPT-CTRL}. The differential constraint in \eqref{OPT-CTRL} is automatically satisfied by virtue of the differential flatness property \cite{van1998real} and $C^2$ smoothness of the \emph{path} $\vect{\theta}$ and \emph{speed profile} $s$. Finally, notice that the objective functionals are identical in both problems.
\end{proof}

Proposition \ref{equiv_cor} is a particular case of the observation in \cite{ross2002pseudospectral} that for differentially flat systems, optimal control problems can be cast as functional optimization problems without differential constraints by virtue of the flatness property. Intuitively, the state differential constraint is translated into a smoothness constraint in the flat output. 
However, \eqref{FLAT-OPT} is still intractable because the problem is nonconvex and we are minimizing over functions instead of vectors. In the next section, we will let the path $\vect{\theta}$ and the speed profile $s$ be B-spline curves and optimize over their control points. We also use their convexity properties to reformulate the constraints into convex conditions with respect to their control points.

\section{Convexification of Problem \eqref{FLAT-OPT}} \label{sec:Convex}
In this section, we describe a sequential convexification approach for  \eqref{FLAT-OPT} by  splitting the problem into three sequential SOCP: the first SOCP finds a safe path $\vect{\theta}(s)$, the second SOCP finds a duration $t_f$ for the trajectory, and the third SOCP computes a safe velocity profile $s(t)$ (see Figure \ref{fig:framework}). The solution of these three convex programs together provides a feasible and possibly sub-optimal solution to  \eqref{FLAT-OPT} with rigorous continuous-time constraint satisfaction guarantees.

\begin{definition}[B-spline path]\label{B-spline Path}
A \emph{B-spline path} is a $C^2$, $2$-dimensional, $d_{\theta}$-degree B-spline curve defined as in \eqref{bspline_curve} over a clamped, uniform knot vector $\vect{\zeta}$ segmenting the interval $[0,1]$ and control points $\vect{\Theta}_j \in\mathbb{R}^2, \; j=0,\ldots,N_{\theta}$.
\end{definition}

\begin{definition}[B-spline speed profile] \label{B-spline Speed Profile}
A \emph{B-spline speed profile} is a $C^2$, $1$-dimensional, $d_s$-degree B-spline curve defined as in \eqref{bspline_curve} over a clamped, uniform knot vector $\vect{\tau}$ segmenting the interval $[0,t_f]$ and control points $p_j\in\mathbb{R},\; j = 0,\ldots,N_{s}$.
\end{definition}


\subsection{B-spline Path Optimization}
The following propositions provide convex relaxations to the conditions of Lemma \ref{ncvx_gamma} and Lemma \ref{ncvx_obs}. 
\begin{proposition} \label{cvx_gamma}
Let $\vect{\theta}(s)$ be a B-spline path and $b_{r,i,j}$ be the $(i,j)$-th entry of matrix $B_r$ defined in Section \ref{sec:bsplines}.
If there exists positive constant $\alpha >0$, column unit vector $\hat{\vect{r}}\in\mathbb{R}^2$, and variables $\underline{v}_{\theta},\overline{a}_{\theta},\beta\in\mathbb{R}$ such that the B-spline path $\vect{\theta}(s)$ satisfies the following conditions:
\begin{subequations}\label{cvx_gamma_cst}
\begin{align}
    &\hat{\vect{r}}^T\vect{\Theta}^{(1)}_{i} \geq \underline{v}_{\theta}, \quad j = 1,\ldots, N_{\theta},\label{cvx_gamma_cst_negdisc1}\\ 
    &\big\|\vect{\Theta}^{(2)}_{i}\big\|_2 \leq \overline{a}_{\theta}, \quad j = 2,\ldots,N_{\theta},\label{cvx_gamma_cst_negdisc2}\\
    &\big\|\mat{2\alpha & \frac{4\tan\overline{\gamma}}{L}\beta-1}^T\big\|_2 \leq 4\beta\tan\overline{\gamma}/L + 1,\label{cvx_gamma_cst_negdisc}\\
    &\overline{a}_{\theta} \leq \alpha\underline{v}_{\theta} - \beta, \quad 
    \beta, \underline{v}_{\theta} \geq 0,
    \end{align}
\end{subequations}
then the state-space trajectory $\big(\vect{x}(t),\vect{u}(t)\big)\in\mathcal{S}_{\gamma},$  $\forall t\in[0,t_f]$ where $\mathcal{S}_{\gamma}$ is the \emph{steering angle safety set} defined in \eqref{safe_set_gamma}.
\end{proposition}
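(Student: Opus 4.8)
The plan is to reduce the claim to the necessary-and-sufficient curvature bound \eqref{ncvx_gamma_cst} of Lemma \ref{ncvx_gamma}, and then argue that the convex conditions \eqref{cvx_gamma_cst} are strong enough to certify it for every $s\in[0,1]$. The argument will rest on three ingredients: (i) the convex-hull inclusion of Proposition \ref{inclusion_prop}, which converts the constraints on the virtual control points $\vect{\Theta}^{(1)}_i$ and $\vect{\Theta}^{(2)}_i$ into uniform bounds on $\vect{\theta}'(s)$ and $\vect{\theta}''(s)$; (ii) the elementary inequality $\|\vect{\theta}'(s)\times\vect{\theta}''(s)\|_2 \leq \|\vect{\theta}'(s)\|_2\,\|\vect{\theta}''(s)\|_2$, which replaces the cross product in \eqref{ncvx_gamma_cst} by a product of norms; and (iii) a quadratic-to-linear relaxation that the second-order cone constraint \eqref{cvx_gamma_cst_negdisc} encodes losslessly.

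First I would apply Proposition \ref{inclusion_prop}. The half-space $\{\vect{w}\in\R^2:\hat{\vect{r}}^T\vect{w}\geq\underline{v}_\theta\}$ is convex, so \eqref{cvx_gamma_cst_negdisc1} gives $\hat{\vect{r}}^T\vect{\theta}'(s)\geq\underline{v}_\theta$ for all $s$; since $\hat{\vect{r}}$ is a unit vector, $\|\vect{\theta}'(s)\|_2\geq\hat{\vect{r}}^T\vect{\theta}'(s)\geq\underline{v}_\theta\geq0$. Similarly, the Euclidean ball $\{\vect{w}:\|\vect{w}\|_2\leq\overline{a}_\theta\}$ is convex, so \eqref{cvx_gamma_cst_negdisc2} gives $\|\vect{\theta}''(s)\|_2\leq\overline{a}_\theta$ for all $s$. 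The index ranges $j=1,\ldots,N_\theta$ and $j=2,\ldots,N_\theta$ are precisely those demanded by Proposition \ref{inclusion_prop} for the first and second derivatives of a clamped B-spline, so the inclusions hold on the whole interval.

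The heart of the proof is ingredient (iii). Squaring \eqref{cvx_gamma_cst_negdisc} (legitimate because its right-hand side $4\beta\tan\overline{\gamma}/L+1$ is nonnegative) and cancelling the common terms reduces it exactly to the discriminant inequality $\alpha^2\leq 4\beta\tan\overline{\gamma}/L$. This in turn certifies that the parabola $(\tan\overline{\gamma}/L)\,t^2$ dominates the line $\alpha t-\beta$ for every $t\in\R$. Evaluating at $t=\underline{v}_\theta$ and chaining with $\overline{a}_\theta\leq\alpha\underline{v}_\theta-\beta$ yields $\overline{a}_\theta\leq(\tan\overline{\gamma}/L)\,\underline{v}_\theta^2$. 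Feeding this into ingredients (i) and (ii) gives, for all $s$, $\|\vect{\theta}'\times\vect{\theta}''\|_2\leq\|\vect{\theta}'\|_2\,\|\vect{\theta}''\|_2\leq\|\vect{\theta}'\|_2\,\overline{a}_\theta\leq(\tan\overline{\gamma}/L)\,\|\vect{\theta}'\|_2\,\underline{v}_\theta^2\leq(\tan\overline{\gamma}/L)\,\|\vect{\theta}'\|_2^3$, which is exactly \eqref{ncvx_gamma_cst}; Lemma \ref{ncvx_gamma} then delivers $(\vect{x}(t),\vect{u}(t))\in\mathcal{S}_\gamma$ for all $t\in[0,t_f]$.

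The step I expect to be the main obstacle is recognizing and verifying that \eqref{cvx_gamma_cst_negdisc} is the exact SOC lift of the nonpositive-discriminant (parabola-domination) condition; once this is seen, the rest is a routine chain of inequalities. The only subtlety to address separately is the degenerate case $\underline{v}_\theta=0$: then $\beta,\overline{a}_\theta\geq0$ force $\overline{a}_\theta=0$, hence $\vect{\theta}''\equiv0$, the path is a straight segment, and \eqref{ncvx_gamma_cst} holds trivially with zero curvature; when $\underline{v}_\theta>0$ the bound $\|\vect{\theta}'\|_2\geq\underline{v}_\theta>0$ simultaneously rules out the flatness singularity and justifies dividing by $\|\vect{\theta}'\|_2^3$.
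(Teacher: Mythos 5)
Your proof is correct and follows essentially the same route as the paper's: Proposition \ref{inclusion_prop} converts the virtual-control-point constraints into uniform bounds $\underline{v}_{\theta}\leq\|\vect{\theta}'(s)\|_2$ and $\|\vect{\theta}''(s)\|_2\leq\overline{a}_{\theta}$, the SOC constraint \eqref{cvx_gamma_cst_negdisc} is unpacked as the nonpositive-discriminant condition $\alpha^2\leq 4\beta\tan\overline{\gamma}/L$ for the quadratic $(\tan\overline{\gamma}/L)t^2-\alpha t+\beta$, and the chain through $\|\vect{\theta}'\times\vect{\theta}''\|_2\leq\|\vect{\theta}'\|_2\|\vect{\theta}''\|_2$ lands on \eqref{ncvx_gamma_cst} and Lemma \ref{ncvx_gamma}. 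Your explicit treatment of the Cauchy--Schwarz step $\|\vect{\theta}'\|_2\geq\hat{\vect{r}}^T\vect{\theta}'$ and of the degenerate case $\underline{v}_{\theta}=0$ (where \eqref{ncvx_gamma_cst} involves division by $\|\vect{\theta}'\|_2^3$) is slightly more careful than the paper's, but the argument is the same.
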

\begin{proof}
By Proposition \ref{inclusion_prop} conditions \eqref{cvx_gamma_cst_negdisc1}-\eqref{cvx_gamma_cst_negdisc2} imply that $\underline{v}_{\theta} \leq \|\vect{\theta}'(s)\|_2$ and $\|\vect{\theta}''(s)\|_2\leq \overline{a}_{\theta}$ for all $s\in[0,1)$. 
Continue by expanding  \eqref{cvx_gamma_cst_negdisc} to observe that $0 \geq \alpha^2 -4\beta\tan\overline{\gamma}/L\beta \triangleq \Delta_{v}$.
Notice that $\Delta_{v}$ is the discriminant of the
quadratic polynomial in $\underline{v}_{\theta}$ given by $p(\underline{v}_{\theta}) \triangleq
\frac{\tan\overline{\gamma}}{L}\underline{v}_{\theta}^2 -
\alpha \underline{v}_{\theta} + \beta$. 
  It follows from $\Delta_{v}\leq 0$ that the roots of
  $p(\underline{v}_{\theta})$ are either repeated and real, or complex
  conjugates. Therefore, the polynomial $p(\underline{v}_{\theta})$ does
  not change sign. Since  $p(0)\geq 0$ (because $\beta \geq 0 $), we must have that $p(\underline{v}_{\theta})\geq0$. In particular, we can
  now observe that
  \begin{equation*}
\overline{a}_{\theta} \!\leq\! \alpha \underline{v}_{\theta} - \beta
    \!\leq \underline{v}_{\theta}^2\tan\overline{\gamma}/L \!\implies\! \|\vect{\theta}''(s)\|_2 \!\leq\! \|\vect{\theta}'(s)\|_2^2\tan\overline{\gamma}/L
  \end{equation*}
  holds. Now multiply by $\|\vect{\theta}'(s)\|_2 \geq 0$ in both sides of the implied inequality to establish:
  \begin{equation*}
     \|\vect{\theta}'(s)\|_2^3 \tan\overline{\gamma}/L \geq \|\vect{\theta}''(s)\|_2 \|\vect{\theta}'(s)\|_2 \geq \|\vect{\theta}'(s)\times \vect{\theta}''(s)\|_2. 
  \end{equation*}
  The conclusion follows directly from Lemma \ref{ncvx_gamma}.
\end{proof}

\begin{proposition}\label{cvx_obs}
Let $\mathcal{D}\subseteq\mathbb{R}^2$ be a given SOC. If the control points $\vect{\Theta}_j$ of the B-spline path $\vect{\theta}(s)$ satisfy:
\begin{align}\label{cvx_obs_cst}
    \vect{\Theta}_j \in \mathcal{D}, \quad j = 0,\ldots, N_\theta,
\end{align}
then the state-space trajectory $\vect{x}(t)\in\mathcal{D}$ for all $t\in[0,t_f]$.
\end{proposition}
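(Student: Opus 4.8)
The plan is to invoke Proposition~\ref{inclusion_prop}, the continuous-time set inclusion result for B-spline curves, applied at the level of the curve itself (order $r=0$) rather than a derivative. The key observation is that the final statement concerns the \emph{position} of the vehicle, $\vect{r}(t) = (x(t), y(t))^T$, and by Lemma~\ref{ncvx_obs} the position trajectory satisfies $\vect{x}(t) \in \mathcal{D}$ for all $t$ if and only if the path satisfies $\vect{\theta}(s) \in \mathcal{D}$ for all $s \in [0,1]$. Since $\mathcal{D}$ is assumed to be a second-order cone (hence convex), and $\vect{\theta}(s)$ is a clamped B-spline curve, Proposition~\ref{inclusion_prop} directly relates containment of the control points to containment of the entire curve.

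First I would note that when $r = 0$, the $0$-th order virtual control points $\vect{p}_j^{(0)} = \vect{p}_j$ are simply the ordinary control points $\vect{\Theta}_j$, since $B_0$ is the identity (the VCP construction in Definition~\ref{virtual_control_points} reduces to $P^{(0)} = P$). Thus the hypothesis \eqref{cvx_obs_cst}, namely $\vect{\Theta}_j \in \mathcal{D}$ for $j = 0, \ldots, N_\theta$, is exactly the premise $\vect{p}_j^{(0)} \in \mathcal{S}$ of Proposition~\ref{inclusion_prop} with $\mathcal{S} = \mathcal{D}$ and all control points included. Applying the first clause of Proposition~\ref{inclusion_prop} then yields $\vect{\theta}(s) \in \mathcal{D}$ for all $s$ in the parameter interval. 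Since the clamped knot vector $\vect{\zeta}$ segments $[0,1]$ (Definition~\ref{B-spline Path}), this gives $\vect{\theta}(s) \in \mathcal{D}$ for all $s \in [0,1]$.

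Having established $\vect{\theta}(s) \in \mathcal{D}$ for all $s \in [0,1]$, I would then apply Lemma~\ref{ncvx_obs}, whose condition \eqref{ncvx_obs_cst} is precisely this statement. The lemma concludes that the state-space trajectory $\Phi(t) \in \mathcal{S}_D$ for all $t \in [0,t_f]$, and by the definition of $\mathcal{S}_D$ in \eqref{safe_set_obs}, this means the position $\vect{r}(t) = (x(t),y(t))^T \in \mathcal{D}$, which is the desired conclusion $\vect{x}(t) \in \mathcal{D}$ for all $t \in [0,t_f]$.

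This proof is essentially a two-line chaining of earlier results, so there is no genuine obstacle. The only point requiring slight care is the boundary of the parameter interval: Proposition~\ref{inclusion_prop} guarantees inclusion on the half-open interval $[\tau_0, \tau_\nu)$, so strictly speaking one must confirm that the right endpoint $s = 1$ is also covered. For a clamped B-spline this follows because the curve interpolates its final control point $\vect{\Theta}_{N_\theta} \in \mathcal{D}$ at $s = 1$ by the clamping property \eqref{clamped}, closing the gap at the endpoint. I would mention this briefly for completeness but would not belabor it.
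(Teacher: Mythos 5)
Your proof is correct and follows essentially the same route as the paper's: apply Proposition~\ref{inclusion_prop} with $r=0$ to get $\vect{\theta}(s)\in\mathcal{D}$, then conclude via Lemma~\ref{ncvx_obs}. Your extra remark about closing the half-open interval at $s=1$ using the clamping property is a small but genuine improvement in rigor over the paper's proof, which silently states the inclusion only on $[0,1)$.
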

\begin{proof}
Condition \eqref{cvx_obs_cst} implies, by Proposition \ref{inclusion_prop}, that $\vect{\theta}(s) \in \mathcal{D}$ for all $s\in[0,1)$. The conclusion follows from Lemma \ref{ncvx_obs}.
\end{proof}
\begin{remark}
While the obstacle-free space $\mathcal{D}$ is generally nonconvex, the convexity assumption is easily relaxed by considering the concept of ``safe corridor'' (union of convex sets) and enforcing the conditions of Proposition \ref{cvx_obs} segment-wise instead of globally. The reader is referred to our previous work \cite{freire2021flatness} and to \cite{gao2018online,sun2020fast} for more information.
\end{remark}

For fixed values of $\alpha$ and $\hat{\vect{r}}$, the conditions of Proposition \ref{cvx_gamma} are convex and we can formulate the following SOCP to find a safe path $\vect{\theta}(s)$:
\begin{align} \label{A-SOCP}\tag{PATH-SOCP}
    \min
    \; &\int_{0}^{1}\|\vect{\theta}'''(s)\|_2^2 \; \mathrm{d}s + \overline{v}_{\theta} - \underline{v}_{\theta} + \overline{a}_{\theta}\\
    \text{s. t.} \;& \vect{\Theta}_0  = \vect{r}_0,\quad \vect{\Theta}_{N_\theta} = \vect{r}_f,\nonumber\\
    &\vect{\Theta}^{(1)}_{1} \!=\! \overline{v}_{\theta}(\cos\psi_0, \sin\psi_0)^T, \nonumber\\
    &\vect{\Theta}^{(1)}_{N_{\theta}} \!=\! \overline{v}_{\theta}(\cos\psi_f, \sin\psi_f)^T, \nonumber\\
    &\big\|\vect{\Theta}^{(1)}_{i}\big\|_2 \leq \overline{v}_{\theta}, \quad j = 1,\ldots,N_{\theta},\nonumber\\
    & \mbox{\text{\eqref{cvx_gamma_cst} and \eqref{cvx_obs_cst} hold}}, \nonumber
\end{align}
with decision variables $\beta,\overline{v}_{\theta},\underline{v}_{\theta},\overline{a}_{\theta}$ and $\vect{\Theta}_0,\ldots,\vect{\Theta}_{N_{\theta}}$, 
where $\overline{\gamma}$ is the maximum steering angle, $\vect{r}_m \triangleq (x_m, y_m)^T, \; m\in\{0,f\}$ is the initial/final position vector and $\psi_m$ is the initial/final heading angle. Note that the resulting B-spline path $\vect{\theta}$ has bounded derivatives given by $\overline{v}_{\theta}$ and $\overline{a}_{\theta}$ which will be used to guarantee the safety of speed profiles in Section \ref{sec:Speed Profile Optimization}.


\begin{remark}
Proposition \ref{cvx_gamma} contains two convex relaxations by lower-bounding a norm and a convex, quadratic function. Because of the relaxations, the size of the feasible region of \eqref{A-SOCP} depends on the choice of parameters $\alpha$ and $\hat{\vect{r}}$. We found the following heuristics worked well:
\begin{equation*}
    \alpha = \frac{2\tan\overline{\gamma}}{L}\|\vect{r}_f-\vect{r}_0\|_2, \quad
    \hat{\vect{r}} = \frac{\vect{r}_f - \vect{r}_0}{\|\vect{r}_f-\vect{r}_0\|_2}.
\end{equation*}
\end{remark}



\subsection{Temporal Optimization}
In this subsection, we find an appropriate trajectory duration
$t_f$ by considering a minimization of both $t_f$ and the magnitude of the acceleration vector.
Consider a B-spline path $\vect{\theta}(s)$ feasible in \eqref{A-SOCP}, and the  functions $b(s) \triangleq \dot{s}^2$ and $a(s) \triangleq\ddot{s}$, which must satisfy the differential condition $b'(s) = 2a(s)$.
Following \cite{verscheure2009time}, we have that
\begin{equation} \label{time_int}
    t_f = \int_{0}^{t_f}1\;\mathrm{d}t = \int_{0}^1\frac{1}{\dot{s}}\;\mathrm{d}s = \int_{0}^1b(s)^{-1/2} \; \mathrm{d}s.
\end{equation}
Purely minimizing the trajectory duration given by $t_f$ results in maximum-speed velocity profiles. We additionally minimize the acceleration to encourage trajectories with mild friction circle profiles  by choosing a Lagrange cost functional \cite{rajamani2011vehicle}: 
\begin{equation}
    L\big(\vect{x}(t),\vect{u}(t)\big) \triangleq \dot{v}^2(t) + v^2(t)\dot{\psi}^2(t)= \|\ddot{\vect{y}}(t)\|_2^2 .
\end{equation}
We can now write the objective functional entirely in terms of (and convex with respect to) the new functions $a(s)$ and $b(s)$ as follows:
\begin{equation*}
    J\big(a(s),b(s)\big) = \int_{0}^1\frac{\nu}{\sqrt{b(s)}} + \| a(s)\vect{\theta}'(s) + b(s)\vect{\theta}''(s)\|^2_2 \; \mathrm{d}s.
\end{equation*}
We follow a similar procedure as in \cite{verscheure2009time} and consider $N_t+1$ points partitioning the interval $[0,1]$ into $N_t$ uniform segments with width $\Delta s \triangleq 1/N_t$.
The discretized Lagrange cost functional becomes 
$$
L(s_i,a_i,b_i) = \| a_i\vect{\theta}'(s_i) + b_i\vect{\theta}''(s_i)\|^2_2,
$$ 
where $a_i$ and $b_i$ are the decision variables representing $a(s_i)$ and $b(s_i)$, respectively. Assuming that $a(s)$ is piecewise constant over each segment $[s_i, s_{i+1}), i = 0,\ldots,N_t-1$ where $s_i \triangleq i\Delta s$, we can exactly evaluate the integral \eqref{time_int} to avoid the case when $\dot{s} = 0$ as shown in \cite{verscheure2009time}. 

We formulate the following SOCP to obtain the trajectory duration $t_f$:
\begin{align} \label{B-SOCP}\tag{TIME-SOCP}
    \min \quad& \sum_{i=0}^{N_t-1} 2\nu\Delta sd_i + \sum_{i=0}^{N_t} L(s_i,a_i,b_i)\\
    \text{s. t.} \quad &\big\|(2c_i , b_i-1)^T\big\|_2 \leq b_i+1,\quad i = 0,\ldots,N_t,\nonumber\\
    &\big\|(2, \overline{c}_i - d_i)^T\big\|_2 \leq \overline{c}_i + d_i,\; i=0,\ldots,N_t-1,\nonumber\\
    &2\Delta sa_i = b_i - b_{i-1},\quad i=1,\ldots,N_t,\nonumber\\
    &b_0 \|\vect{\theta}'(0)\|_2^2 = v_0^2, \quad b_{N_t}\|\vect{\theta}'(1)\|_2^2 = v_f^2,\nonumber\\
    &b_i\|\vect{\theta}'(s_i)\|_2^2 \leq \overline{v}^2, \quad i = 0,\ldots,N_t,\nonumber\\
    &\Big| a_i \|\vect{\theta}'(s_i)\|_2 + b_if_i \Big|\leq \overline{a}, \quad i=0,\ldots,N_t, \nonumber\\
    &\overline{c}_i \triangleq  c_{i+1} + c_i, \quad f_i \triangleq \big(\vect{\theta}'(s_i) \cdot \vect{\theta}''(s_i)\big)/\|\vect{\theta}(s_i)\|_2, \nonumber
\end{align}
with decision variables $a_i,b_i,c_i,d_i$, $i= 0,\ldots,N_t$. The first two constraints are the SOCP embedding of \eqref{time_int} given in \cite{verscheure2009time}, the third constraint is from the differential constraint relating $a(s)$ and $b(s)$, the fourth sets initial and final speeds, the fifth ensures the speed bound is respected and the sixth ensures the acceleration bound is respected. From our assumption that the function $a(s)$ is constant over each $[s_i,s_{i+1})$ segment, we can recover the duration of each segment from the constant acceleration equation $\Delta t_i = (\sqrt{b_i} + \sqrt{b_{i-1}})/a_i,\; i = 1,\ldots, N_t.$ 
The overall duration of the trajectory is then $ t_f = \sum_{i=1}^{N_t} \Delta t_i. $

\begin{remark}
The solution of \eqref{B-SOCP} provides a safe speed profile at discrete time instances. If continuous-time safety is not critical, it suffices to stop here and retrieve the discretized state-space solution. In addition, if the desired trajectory duration $t_f$ is known, one can skip \eqref{B-SOCP} and proceed to the next SOCP after solving \eqref{A-SOCP}.
\end{remark}

\subsection{Speed Profile Optimization} \label{sec:Speed Profile Optimization}
Assume that $t_f$ is given by the solution of \eqref{B-SOCP} or specified a priori. Let $\vect{\theta}(s)$ be a B-spline path feasible in \eqref{A-SOCP} and $s(t)$ be a B-spline speed profile. The following propositions provide convex relaxations to the conditions of Lemma \ref{ncvx_v} and Lemma \ref{ncvx_v_dot}. 
\begin{proposition} \label{cvx_v}
If the condition
\begin{equation}\label{cvx_v_cst}
    0\leq \overline{v}_{\theta} p_j^{(1)} \leq \overline{v}, \quad j = 1,\ldots, N_s,
\end{equation}
holds, then the state-space trajectory $\vect{x}(t)\in\mathcal{S}_v,$ $\forall t\in[0,t_f)$, where $\mathcal{S}_v$ is the \emph{forward speed safety set} defined in \eqref{safe_set_v}.
\end{proposition}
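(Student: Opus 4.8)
The plan is to reduce everything to Lemma \ref{ncvx_v}, which characterizes $\Phi(t)\in\mathcal{S}_v$ by the two continuous-time conditions $\dot{s}(t)\geq 0$ and $\dot{s}(t)\|\vect{\theta}'(s(t))\|_2\leq\overline{v}$. My goal is therefore to extract both inequalities from the single control-point hypothesis \eqref{cvx_v_cst}, and the natural vehicle for doing so is the B-spline set-inclusion result of Proposition \ref{inclusion_prop}.

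First I would note that since $s(t)$ is a B-spline speed profile (a $1$-dimensional clamped B-spline), its derivative $\dot{s}(t)$ is again a clamped B-spline curve whose control points are exactly the first-order VCPs $p_j^{(1)}$ of Definition \ref{virtual_control_points}. Scaling by the constant $\overline{v}_{\theta}>0$ (which is positive for any nontrivial path feasible in \eqref{A-SOCP}), the function $\overline{v}_{\theta}\dot{s}(t)$ is a clamped B-spline with control points $\overline{v}_{\theta}p_j^{(1)}$. The hypothesis \eqref{cvx_v_cst} states precisely that these control points, for $j=1,\ldots,N_s$, lie in the convex interval $[0,\overline{v}]$. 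Applying Proposition \ref{inclusion_prop} with $r=1$ then gives $\overline{v}_{\theta}\dot{s}(t)\in[0,\overline{v}]$ for all $t\in[0,t_f)$; dividing by $\overline{v}_{\theta}$ yields $\dot{s}(t)\geq 0$, which is the first condition of Lemma \ref{ncvx_v}.

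Next I would recover the path derivative bound $\|\vect{\theta}'(s)\|_2\leq\overline{v}_{\theta}$ for all $s\in[0,1)$. This comes for free from $\vect{\theta}$ being feasible in \eqref{A-SOCP}: the constraint $\|\vect{\Theta}^{(1)}_j\|_2\leq\overline{v}_{\theta}$ places every first-order VCP of $\vect{\theta}$ in the Euclidean ball of radius $\overline{v}_{\theta}$, a convex set, so Proposition \ref{inclusion_prop} transfers the bound to $\vect{\theta}'(s)$. Combining this with the two facts already obtained, $\dot{s}(t)\geq 0$ and $\overline{v}_{\theta}\dot{s}(t)\leq\overline{v}$, produces the chain $\dot{s}(t)\|\vect{\theta}'(s(t))\|_2\leq\dot{s}(t)\,\overline{v}_{\theta}\leq\overline{v}$, which is the second condition of Lemma \ref{ncvx_v}. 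The desired conclusion $\vect{x}(t)=\Phi(t)\in\mathcal{S}_v$ follows directly.

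The point requiring the most care is the ordering of the two multiplicative bounds: the step $\dot{s}\|\vect{\theta}'\|_2\leq\dot{s}\,\overline{v}_{\theta}$ is valid only because $\dot{s}(t)\geq 0$, so the nonnegativity of $\dot{s}$ must be secured \emph{before} the speed bound is chained. This explains why \eqref{cvx_v_cst} enforces both a lower and an upper bound on the scaled VCPs simultaneously — the lower bound is not a cosmetic monotonicity requirement but is structurally needed to make the relaxation of the velocity constraint sound. A minor technicality is that the conclusion holds on the half-open interval $[0,t_f)$, matching the half-open domain in Proposition \ref{inclusion_prop}; since $\mathcal{S}_v$ is closed and the trajectory is continuous, the inclusion extends to the endpoint $t_f$ by a limiting argument if needed.
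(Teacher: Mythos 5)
Your proposal is correct and follows essentially the same route as the paper: apply Proposition \ref{inclusion_prop} to the first-order VCPs to get $0\leq\overline{v}_{\theta}\dot{s}(t)\leq\overline{v}$, then chain with the bound $\|\vect{\theta}'(s)\|_2\leq\overline{v}_{\theta}$ inherited from feasibility in \eqref{A-SOCP} to conclude via the flat map that $0\leq v(t)\leq\overline{v}$. You simply make explicit two points the paper leaves implicit — where the path-derivative bound comes from, and why nonnegativity of $\dot{s}$ is needed before the multiplicative chaining — which is a faithful elaboration rather than a different argument.
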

\begin{proof}
By Proposition \ref{inclusion_prop}, \eqref{cvx_v_cst} implies that $0\leq \overline{v}_{\theta}\dot{s}(t)\leq \overline{v}$ for all $t\in[0,t_f)$. Because $   \overline{v} \geq \overline{v}_{\theta}\dot{s}(t) \geq \dot{s}(t)\|\vect{\theta}\big(s(t)\big)\|_2 = v(t) \geq 0$, 
the conclusion follows.
\end{proof}

\begin{proposition}
\label{cvx_v_dot} For any given nonnegative vectors $\overline{\vect{m}} = (\overline{m}_0,\ldots,\overline{m}_{N_s-d_s})^T, \; m\in\{\kappa,\epsilon\},$ if the following conditions
\begin{subequations}\label{cvx_v_dot_cst}
\begin{align}
    &0\leq p^{(1)}_j \leq \overline{\kappa}_{k} ,\quad j= k+1,\ldots,k+d,\\
    &-\overline{\epsilon}_{k} \leq p_j^{(2)}\leq \overline{\epsilon}_k,\quad j = k+2,\ldots,k+d,\\
    &\big\|A_{\dot{v}}(\overline{\kappa}_k, \overline{\epsilon}_k)^T  + \vect{b}_{\dot{v}}\big\|_2 \leq \mat{\overline{\kappa}_k & \overline{\epsilon}_k}\vect{c}_{\dot{v}} + d_{\dot{v}},
\end{align}
\end{subequations}
hold for all $k\in\{0,\ldots,N_s-d_s\}$, where $A_{\dot{v}} = \text{\emph{diag}}$($\sqrt{2\overline{a}_{\theta}}$, $-\overline{v}_{\theta}/\sqrt{2})$, $\vect{b}_{\dot{v}} = (0,  (\overline{a} - 1)/\sqrt{2})^T$, $\vect{c}_{\dot{v}} = (0,  -\overline{v}_{\theta}/\sqrt{2})^T$ and $d_{\dot{v}} =  (\overline{a}+1)/\sqrt{2}$,
then the trajectory $\vect{u}(t)\in\mathcal{S}_{\dot{v}}$,  $\forall t\in[0,t_f)$ where $\mathcal{S}_{\dot{v}}$ is the \emph{linear acceleration safety set} defined in \eqref{safe_set_v_dot}.
\end{proposition}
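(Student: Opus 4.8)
The plan is to combine the segment-wise part of Proposition~\ref{inclusion_prop} with the derivative bounds on the path established in \eqref{A-SOCP} and Proposition~\ref{cvx_gamma}, and then to recognize the third condition in \eqref{cvx_v_dot_cst} as the SOC embedding of a convex quadratic sufficient condition for \eqref{ncvx_v_dot_cst}. First I would fix an index $k\in\{0,\ldots,N_s-d_s\}$ and set $i=k+d_s$, so that the local statement of Proposition~\ref{inclusion_prop} applies on the knot interval $[\tau_i,\tau_{i+1})$. Applying it to the first-order VCPs $p_j^{(1)}$ (with $r=1$, index range $j=i-d_s+1,\ldots,i$, i.e.\ $j=k+1,\ldots,k+d_s$) and the first condition of \eqref{cvx_v_dot_cst} gives $0\le\dot{s}(t)\le\overline{\kappa}_k$ on that interval; applying it to the second-order VCPs $p_j^{(2)}$ (with $r=2$, range $j=k+2,\ldots,k+d_s$) and the second condition gives $|\ddot{s}(t)|\le\overline{\epsilon}_k$. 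Since the knot vector $\vect{\tau}$ is clamped and uniform, the intervals $[\tau_i,\tau_{i+1})$ for $i=d_s,\ldots,N_s$ (equivalently $k=0,\ldots,N_s-d_s$) tile $[0,t_f)$, so these pointwise bounds hold everywhere.

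Next I would bound the two acceleration components appearing in Lemma~\ref{ncvx_v_dot}. From feasibility of $\vect{\theta}$ in \eqref{A-SOCP} together with Proposition~\ref{cvx_gamma} we have $\|\vect{\theta}'(s)\|_2\le\overline{v}_{\theta}$ and $\|\vect{\theta}''(s)\|_2\le\overline{a}_{\theta}$. Hence on segment $k$,
\begin{equation*}
|a_t(t)| = |\ddot{s}(t)|\,\|\vect{\theta}'(s)\|_2 \le \overline{\epsilon}_k\overline{v}_{\theta},
\end{equation*}
and, using Cauchy--Schwarz $|\vect{\theta}'(s)\cdot\vect{\theta}''(s)|\le\|\vect{\theta}'(s)\|_2\|\vect{\theta}''(s)\|_2$ so that the factor $\|\vect{\theta}'(s)\|_2$ in the denominator cancels,
\begin{equation*}
|a_n(t)| = \dot{s}^2(t)\,\frac{|\vect{\theta}'(s)\cdot\vect{\theta}''(s)|}{\|\vect{\theta}'(s)\|_2} \le \overline{\kappa}_k^2\,\overline{a}_{\theta}.
\end{equation*}
The triangle inequality then yields $|a_t(t)+a_n(t)|\le \overline{v}_{\theta}\overline{\epsilon}_k + \overline{a}_{\theta}\overline{\kappa}_k^2$, so it suffices to enforce the convex quadratic condition $\overline{a}_{\theta}\overline{\kappa}_k^2 + \overline{v}_{\theta}\overline{\epsilon}_k \le \overline{a}$ for every $k$.

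Finally I would verify that the third constraint in \eqref{cvx_v_dot_cst} is exactly this quadratic condition written as an SOC. Substituting the given $A_{\dot{v}},\vect{b}_{\dot{v}},\vect{c}_{\dot{v}},d_{\dot{v}}$, squaring both sides (the right-hand side is nonnegative because the SOC constraint forces it to dominate a norm), and cancelling the terms quadratic in $\overline{a}-\overline{v}_{\theta}\overline{\epsilon}_k$ that appear identically on both sides collapses the inequality to $4\overline{a}_{\theta}\overline{\kappa}_k^2 \le 4(\overline{a}-\overline{v}_{\theta}\overline{\epsilon}_k)$, i.e.\ $\overline{a}_{\theta}\overline{\kappa}_k^2+\overline{v}_{\theta}\overline{\epsilon}_k\le\overline{a}$. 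Combined with the bound above this gives $|\dot{v}(t)|=|a_t(t)+a_n(t)|\le\overline{a}$ on each segment, hence for all $t\in[0,t_f)$, and the conclusion $\vect{u}(t)\in\mathcal{S}_{\dot{v}}$ follows from Lemma~\ref{ncvx_v_dot} and the definition \eqref{safe_set_v_dot}.

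The main obstacle I anticipate is the index bookkeeping: matching the VCP ranges $j=k+1,\ldots,k+d_s$ and $j=k+2,\ldots,k+d_s$ to the correct local interval through the substitution $i=k+d_s$, and confirming that the resulting intervals exactly cover $[0,t_f)$. The remaining steps are routine, namely the Cauchy--Schwarz cancellation that removes any dependence on the lower bound $\underline{v}_{\theta}$, and the algebraic check that the third constraint is the standard conic representation of $\overline{a}_{\theta}\overline{\kappa}_k^2+\overline{v}_{\theta}\overline{\epsilon}_k\le\overline{a}$.
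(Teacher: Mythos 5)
Your proof is correct and takes essentially the same route as the paper's: a segment-wise application of Proposition~\ref{inclusion_prop} to bound $\dot{s}$ and $\ddot{s}$ on each knot interval, the path derivative bounds $\overline{v}_{\theta}$ and $\overline{a}_{\theta}$ inherited from \eqref{A-SOCP}, the triangle inequality, and the recognition that the SOC constraint encodes $\overline{a}_{\theta}\overline{\kappa}_k^2+\overline{v}_{\theta}\overline{\epsilon}_k\le\overline{a}$. You make explicit the Cauchy--Schwarz step behind the bound on $a_n$ and the conic algebra that the paper's proof leaves implicit (your factor of $4$ in the reduced inequality should be a $2$, but this is immaterial since it cancels to the same condition).
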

\begin{proof}
The first two conditions imply, by Proposition \ref{inclusion_prop}, that for any $k\in\{0,\ldots,N_s-d_s\}$, $0\leq\dot{s}(t)\leq \overline{\kappa}_k$ and $|\ddot{s}(t)| \leq \overline{\epsilon}_k$ hold  $\forall t \in [\tau_{k+d_s},\tau_{k+d_s+1})$.
Expanding the last condition, we determine that for any $k\in\{0,\ldots,N_s-d_s\}$, $0 \leq \overline{\kappa}^2_k\overline{a}_{\theta} + \overline{\epsilon}_{k}\overline{v}_{\theta} = |\overline{\kappa}^2_k\overline{a}_{\theta}|+|\overline{\epsilon}_{k}\overline{v}_{\theta}| \leq \overline{a}$. 
Notice that for any $k\in\{0,\ldots,N_s-d_s\}$, the following conditions hold for all $t\in[\tau_{k+d_s},\tau_{k+d_s+1})$:
\begin{align*}
    |\overline{\epsilon}_k\overline{v}_{\theta}| &\geq  |\ddot{s}\small(t\small)|\|\vect{\theta}'(s\small(t\small))\|_2 = |a_t\small(t\small)|, \\
    |\overline{\kappa}_k^2\overline{a}_{\theta}| &\geq \big|{\dot{s}^2\small(t\small) \big(\vect{\theta}'(s\small(t\small))\cdot\vect{\theta}''(s\small(t\small))\big)}/{\|\vect{\theta}'(s\small(t\small))\|}\big| = |a_n\small(t\small)|,
\end{align*}
where $a_t$ and $a_n$ are defined in \eqref{ncvx_v_dot_cst}. Therefore, by the triangle inequality $\overline{a} \geq |a_t(t)| + |a_n(t)| \geq |a_t(t) + a_n(t)|$ holds for all $t\in[\tau_{k+d_s},\tau_{k+d_s+1})$. Recalling that the knot vector $\vect{\tau}$ is clamped and uniform \eqref{clamped_uniform}, segmenting the interval $[0,t_f]$, and that the above inequality holds for all $k\in\{0,\ldots,N_s-d_s\}$, we can establish that the inequality holds, in fact, for all $t\in[0,t_f)$. The conclusion now follows by Lemma \ref{ncvx_v_dot}.
\end{proof}

We formulate the following SOCP to obtain the speed profile.
\begin{align}\tag{SPEED-SOCP}\label{C-SOCP}
    \min\limits_{\overline{\vect{\kappa}}, \overline{\vect{\epsilon}}, p_0,\ldots,p_{N_s}} \quad & \int_{0}^{t_f}\dddot{s}^2(t)\;\mathrm{d}t 
    \\
    \text{s. t.} \quad & p_0 = 0, \quad p_{N_s} = 1,\nonumber\\
    &\overline{v}_{\theta}p^{(1)}_1 = v_0, \quad \overline{v}_{\theta}p^{(1)}_{N_s} = v_f \nonumber\\
    & \mbox{\text{\eqref{cvx_v_cst} and \eqref{cvx_v_dot_cst} hold}}, \nonumber
\end{align}
where $v_0$ and $v_f$ are given initial and final speeds, respectively. 


\subsection{Safety Analysis}
The following theorem summarizes the main theoretical contributions of this paper.
\begin{theorem} \label{main}
Let $\vect{\theta}(s)$ and $s(t)$ be, respectively, a B-spline path feasible in \eqref{A-SOCP} and a B-spline speed profile feasible in \eqref{C-SOCP} with duration $t_f$.
The corresponding state-space trajectory of \eqref{dyn_model} obtained by passing the parameterized flat outputs \eqref{flat_parameterization} in terms of $\vect{\theta}$ and $s$ through the flat map \eqref{flat_map} satisfies the initial $\vect{x}(0) = \vect{x}_0$ and final $\vect{x}(t_f) = \vect{x}_f$ conditions as well as safety specifications:
\begin{equation}\label{thrm1_safety}
    \big(\vect{x}(t),\vect{u}(t)\big)\in\mathcal{S}_{\gamma}, \quad \vect{x}(t) \in \mathcal{S}_{\mathcal{D}} \cap \mathcal{S}_v, \quad \vect{u}(t) \in \mathcal{S}_{\dot{v}},
\end{equation}
$\forall t \in [0,t_f)$. Thus, $t_f$, $\vect{x}(t)$ and $\vect{u}(t)$ are feasible in \eqref{OPT-CTRL}.
\end{theorem}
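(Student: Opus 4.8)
The plan is to prove Theorem \ref{main} as an assembly of the four convex-relaxation propositions together with a direct verification of the boundary conditions, since the substantive work is already encapsulated in those propositions. First I would observe that feasibility of $\vect{\theta}(s)$ in \eqref{A-SOCP} forces the constraints \eqref{cvx_gamma_cst} and \eqref{cvx_obs_cst} to hold, so Proposition \ref{cvx_gamma} and Proposition \ref{cvx_obs} immediately yield $(\vect{x}(t),\vect{u}(t))\in\mathcal{S}_\gamma$ and $\vect{x}(t)\in\mathcal{S}_D$ for all $t\in[0,t_f]$. Symmetrically, feasibility of $s(t)$ in \eqref{C-SOCP} forces \eqref{cvx_v_cst} and \eqref{cvx_v_dot_cst}, so Proposition \ref{cvx_v} and Proposition \ref{cvx_v_dot} give $\vect{x}(t)\in\mathcal{S}_v$ and $\vect{u}(t)\in\mathcal{S}_{\dot v}$ for all $t\in[0,t_f)$. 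Intersecting the domains, all four inclusions of \eqref{thrm1_safety} hold simultaneously on $[0,t_f)$.

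Next I would establish the boundary conditions $\vect{x}(0)=\vect{x}_0$ and $\vect{x}(t_f)=\vect{x}_f$, which is where the clamped B-spline endpoint interpolation is essential. The clamped property identifies the endpoint values and endpoint VCPs of $\vect{\theta}$, $\vect{\theta}'$, $s$ and $\dot s$ with the boundary control points appearing in the equality constraints of \eqref{A-SOCP} and \eqref{C-SOCP}; thus $\vect{\Theta}_0=\vect{r}_0$, $\vect{\Theta}_{N_\theta}=\vect{r}_f$, $p_0=0$ and $p_{N_s}=1$ translate into $\vect{\theta}(0)=\vect{r}_0$, $\vect{\theta}(1)=\vect{r}_f$, $s(0)=0$ and $s(t_f)=1$. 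Composing via \eqref{flat_parameterization} then gives the positions $\vect{y}(0)=\vect{r}_0$, $\vect{y}(t_f)=\vect{r}_f$. For the heading and speed I would push the direction constraint $\vect{\Theta}^{(1)}_1=\overline{v}_\theta(\cos\psi_0,\sin\psi_0)^T$ and the speed constraint $\overline{v}_\theta p^{(1)}_1=v_0$ (and their terminal analogues) through the flat map \eqref{flat_map}: since $\dot{\vect{y}}(0)=\dot{s}(0)\vect{\theta}'(0)$ with $\dot{s}(0)\ge 0$ and $\vect{\theta}'(0)$ pointing along $(\cos\psi_0,\sin\psi_0)^T$, one recovers $\psi(0)=\arctan(\dot y/\dot x)=\psi_0$ and $v(0)=\dot{s}(0)\|\vect{\theta}'(0)\|_2=\overline{v}_\theta p^{(1)}_1=v_0$, with the terminal state treated identically.

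To close the argument I would recover full feasibility for \eqref{OPT-CTRL}. The dynamics \eqref{OPT-CTRL1} are satisfied automatically by the differential flatness of \eqref{dyn_model} (as in the proof of Proposition \ref{equiv_cor}), which requires only that the flat output be $C^2$; this holds because $\vect{\theta}$ and $s$ are $C^2$ B-splines and their convolution in \eqref{flat_parameterization} is $C^2$. Constraints \eqref{OPT-CTRL3}--\eqref{OPT-CTRL6} are exactly the set memberships $\mathcal{S}_v$, $\mathcal{S}_{\dot v}$, $\mathcal{S}_\gamma$, $\mathcal{S}_D$ already established. Finally, since \eqref{OPT-CTRL} requires the constraints on the closed interval $[0,t_f]$ whereas Propositions \ref{cvx_v} and \ref{cvx_v_dot} guarantee the velocity and acceleration inclusions only on $[0,t_f)$, I would extend them to $t=t_f$ by continuity: $\vect{x}(t)$ and $\vect{u}(t)$ are continuous on $[0,t_f]$ and the safety sets are closed, so passing to the limit $t\to t_f^-$ preserves the inclusions at the endpoint.

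The main obstacle I anticipate is not any single inequality but the endpoint bookkeeping: correctly matching the virtual-control-point indexing of the clamped B-splines to the endpoint values of $\vect{\theta},\vect{\theta}',s,\dot s$, and verifying that the sign convention $\dot{s}(0)\ge 0$ makes the heading recovery unambiguous (rather than off by $\pi$). Everything else is a direct citation of Propositions \ref{cvx_gamma}--\ref{cvx_v_dot} combined with the continuity argument above.
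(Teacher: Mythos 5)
Your proposal is correct and follows essentially the same route as the paper's proof: invoke Propositions \ref{cvx_gamma}--\ref{cvx_v_dot} for the safety inclusions in \eqref{thrm1_safety}, and use the clamped-B-spline endpoint interpolation together with the flat map \eqref{flat_map} to verify the initial and final state conditions. Your closing continuity argument extending the inclusions from $[0,t_f)$ to the closed interval is a small patch the paper's proof omits, but it does not change the structure of the argument.
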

\begin{proof}
The initial and final positions $(x,y)$ are satisfied because  $\vect{\theta}(0) = \vect{r}_0 = (x_0,y_0)^T$ and $\vect{\theta}(1) = \vect{r}_f = (x_f,y_f)^T$. Furthermore, $\|\vect{\theta}(s)\|_2 = \overline{v}_{\theta}$ for $s\in\{0,1\}$. Since $s(0) = 0$, $s(t_f) = 1$, $\overline{v}_{\theta}\dot{s}(0) = v_0$ and $\overline{v}_{\theta} \dot{s}(t_f) = v_f$, the flat output parameterization \eqref{flat_parameterization_dot} and the flat map \eqref{flat_map} imply that the obtained state-space trajectory $\vect{x}(t)$ satisfies the specified initial and final velocities $v_0$ and $v_f$. For the initial and final heading angles $\psi_0$ and $\psi_f$, notice that $\vect{\theta}'(m) = \big(x'(m), y'(m)\big)^T = \overline{v}_{\theta} \big(\cos\psi_{q}, \sin\psi_q\big)^T$, 
with $(m,q)\in\{(0,0),(1,f)\}$.
Since the trajectory $\psi(t)$  is given only by the path $\vect{\theta}(s)$ under parameterization \eqref{flat_parameterization}, it follows that $\psi(p) = \arctan\big({\overline{v}_{\theta}\sin\psi_q}/({\overline{v}_{\theta}\cos{\psi_q})}\big) = \psi_q$,  $(p,q) \in\{(0,0),(1,f)\}$.
Finally, \eqref{thrm1_safety} follows by Propositions \ref{cvx_gamma}, \ref{cvx_obs}, \ref{cvx_v} and \ref{cvx_v_dot}. 
\end{proof}


\section{Simulation Examples } \label{sec:Examples}
In this section, we evaluate the performance and efficiency of the proposed approach by three simulation examples. 
In all examples, we use  B-spline path $\vect{\theta}$ parameters $d_{\theta} = 4$ and $N_{\theta} = 20$, and B-spline speed profile $s$ parameters $d_{s} = 4$ and $N_{s} = 20$. For \eqref{B-SOCP}, we use $N_t = 40$.

\begin{example}\label{example1}
Consider \eqref{OPT-CTRL} with the following problem data: initial state $\vect{x}_0 = \vect{0}_{4\times 1}$, final state $\vect{x}_f = (100, 4, 0, 0)^T$, obstacle-free space $\mathcal{D} = \mathbb{R}^2$, maximum steering angle $\overline{\gamma} = 0.0044$ rad ($0.25$ degrees), maximum speed $\overline{v} = 4.2$ m/s, maximum acceleration and braking $\overline{a} = 0.6$ m/s$^2$, duration penalty factor $\nu = 1$ and wheelbase length $L = 2.601$ meters. Note that this problem requires rest-to-rest motion to be solved and, as described in previous sections, the flat map has singularities when the speed is zero. Nonetheless, our approach is able to handle this gracefully. We continue by solving the three proposed SOCP problems sequentially using YALMIP \cite{lofberg2004yalmip} and MOSEK \cite{mosek}. We then pass the resulting path and speed profile through the flat map \eqref{flat_map} to obtain corresponding state $\vect{x}(t)$ and input $\vect{u}(t)$ trajectories. The resulting state and input trajectories are shown in Figure \ref{fig:safe} along with the steering angle $\gamma(t)$. It can be seen that the speed $v$ the acceleration $\dot{v}$ and the steering angle $\gamma$ respect their bounds for all time $t \in [0,t_f]$.

\begin{figure}[htp]
    \centering
    \includegraphics[width=4.3cm]{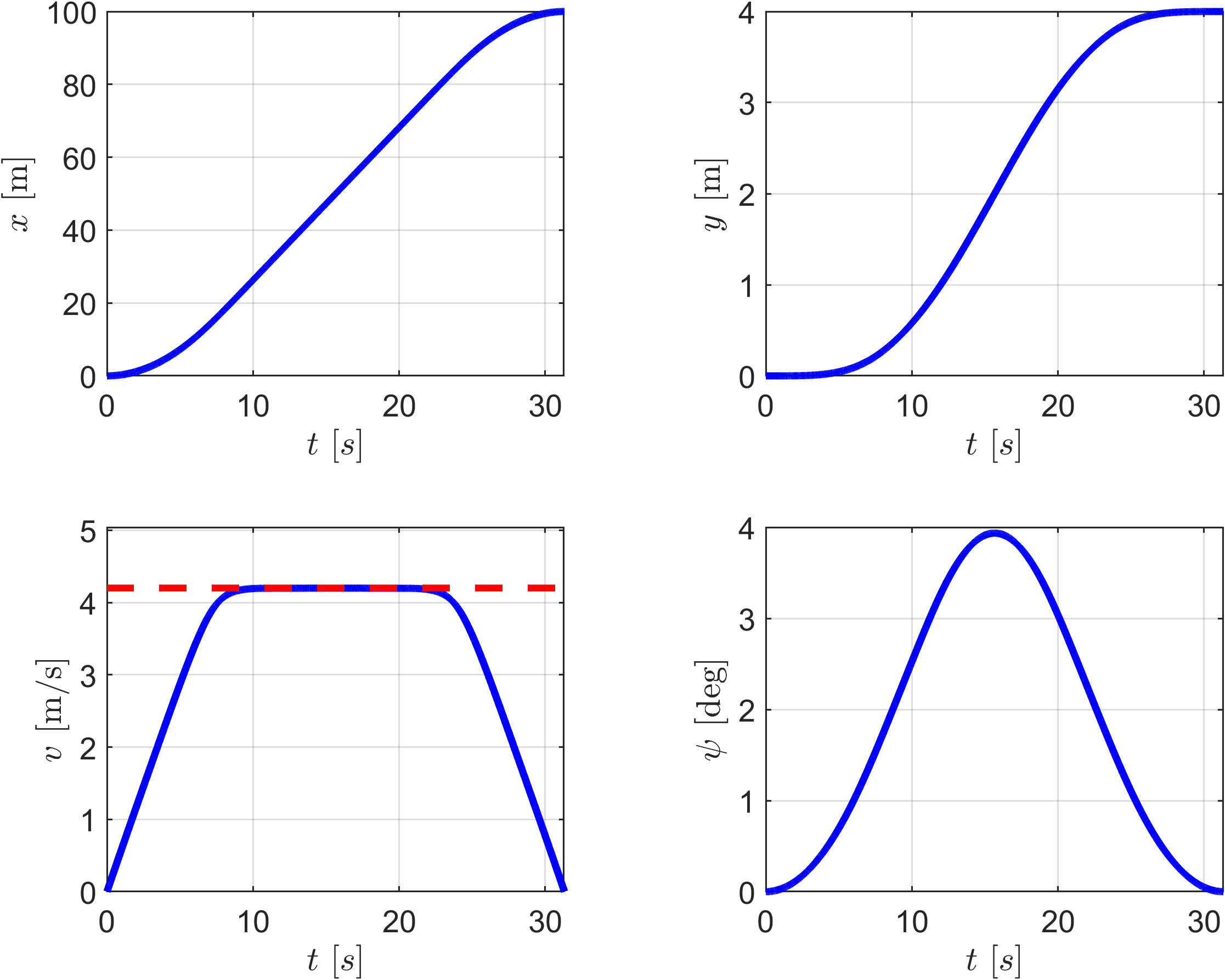}
    \includegraphics[width=4.3cm]{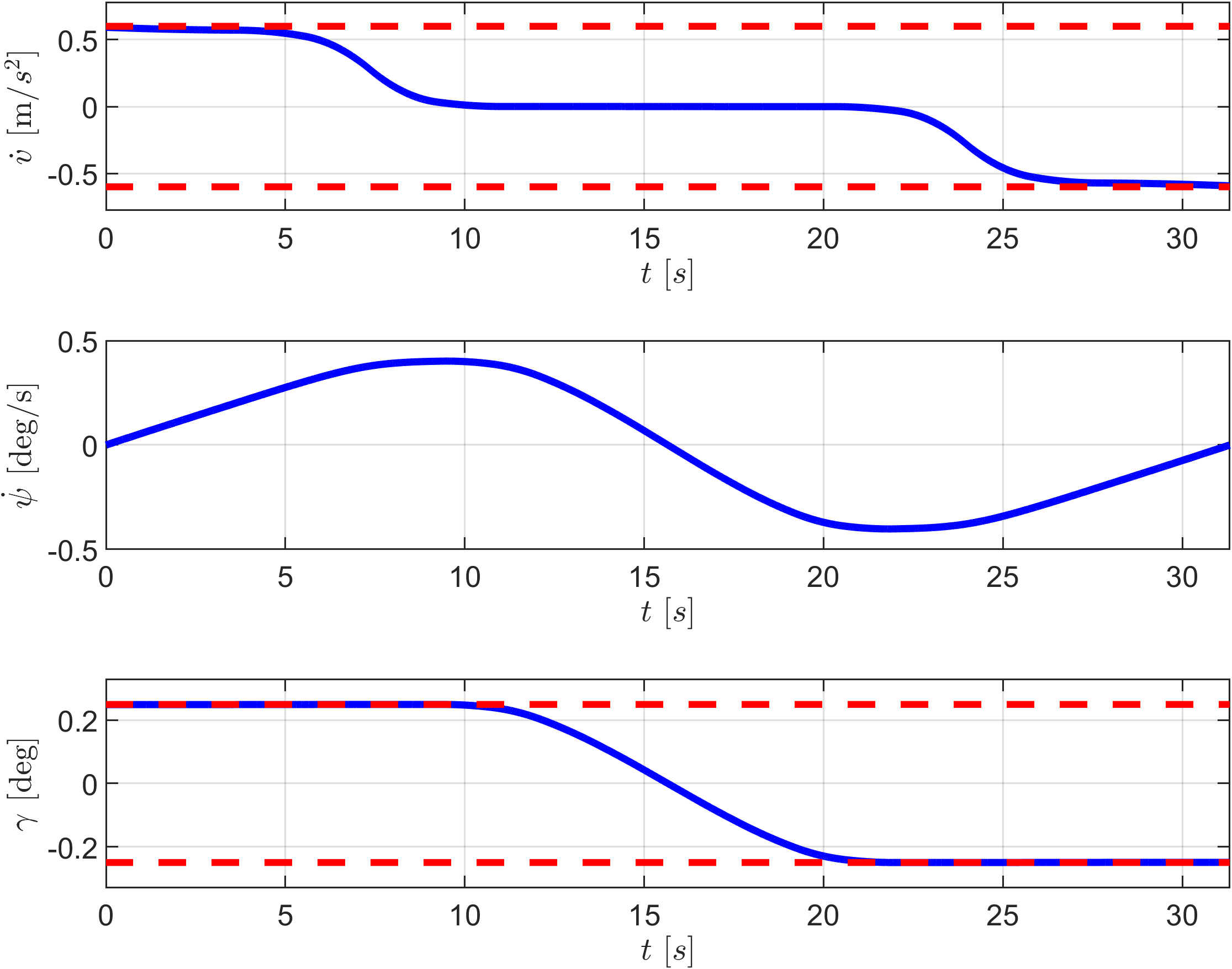}
    \caption{The state (left) and input (right) trajectories of Example \ref{example1} show that the required safety constraints are all satisfied for all $t \in [0,t_f]$: the speed respects $v(t) \leq 4.2$ m/s, the acceleration respects $|\dot{v}(t)| \leq 0.6 $ m/s$^2$ and the steering angle respects $|\gamma(t)|\leq 0.25$ deg}
    \label{fig:safe}
\end{figure}
\end{example}

\begin{example}\label{example2}
\begin{table}[!b]
    \centering
    \caption{Efficiency and optimality of optimal control solvers.}
    \begin{tabular}{|c|c|c|}
    \hline
    & \textbf{Avg solve time} [ms]  & \textbf{Objective value} [-]  \\
    \hline
    proposed method & 28.8 & 6.8495\\
    \hline
    ICLOCS2 & 257 & 6.8134\\
    \hline
    OpenOCL & 94.3 & 6.5534\\
    \hline
    \end{tabular}
    \label{tab:lane_change_perf}
\end{table}
We compare the performance and optimality of the proposed framework with two state-of-the-art optimal control solvers: ICLOCS2 \cite{nie2018iclocs2} and OpenOCL \cite{koenemann2017openocl}. For our framework, we solve the three SOCPs using YALMIP \cite{lofberg2004yalmip} with MOSEK \cite{mosek}. The vehicle considered is a 2021 Bolt EV by Chevrolet with a wheelbase length of $L=2.601$ meters. We let the duration penalty factor $\nu = 1$. We assume a typical 2-lane, straight, road with a posted speed limit of 40 miles per hour and consider a left lane change. The initial and final states are specified as $\vect{x}_0 = (0, 0, 16, 0)^T$ and $\vect{x}_f = (75, 3.7, 17.5, 0)^T$, respectively.
The other parameters are chosen as $\overline{\gamma} = 0.785$ ($45$ degrees), $\mathcal{D}=\mathbb{R}^2$, $\overline{v} = 19$ m/s, $\overline{a} = 2$ m/s$^2$ and $\nu = 1$. With these parameters, we solve the optimal control problem \eqref{OPT-CTRL} with the proposed framework, ICLOCS2 with analytical derivatives information provided and 40 discretization samples, and OpenOCL with the default configuration and 40 discretization samples. The resulting trajectories $\vect{x}(t)$ and $\vect{u}(t)$ are shown in Figure \ref{fig:lane_change}. 
To compare the computational burden of each algorithm, we collect an average solve time of 50 runs with each approach. The results are shown in Table \ref{tab:lane_change_perf}. 
In this example, the proposed approach achieved solve times nearly four times faster than the next leading method. In addition, the objective value of the proposed approach, while higher, was still comparable to that of the other two solvers. 
\end{example}


\begin{figure}[htp]
\centering
\includegraphics[width=8.5cm]{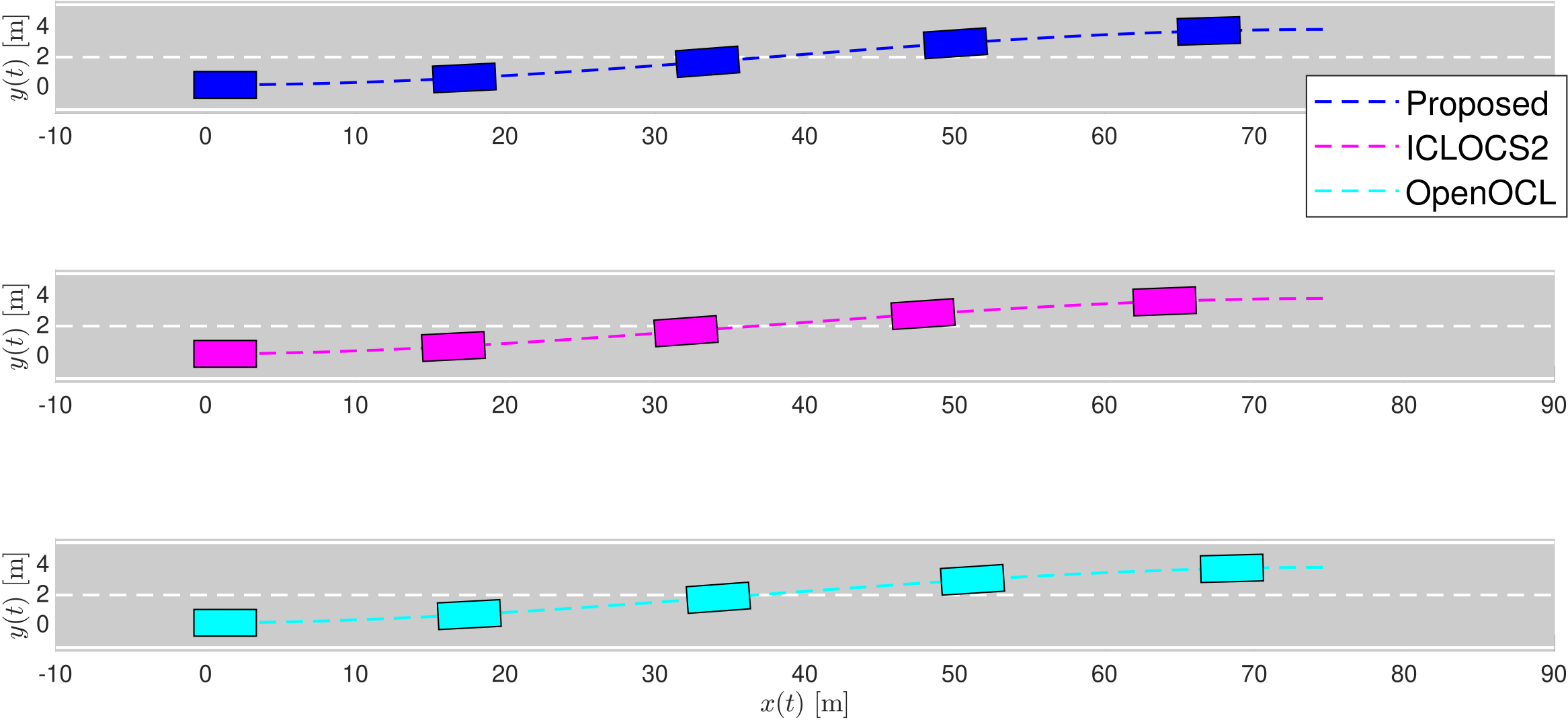}
\includegraphics[width=4.25cm]{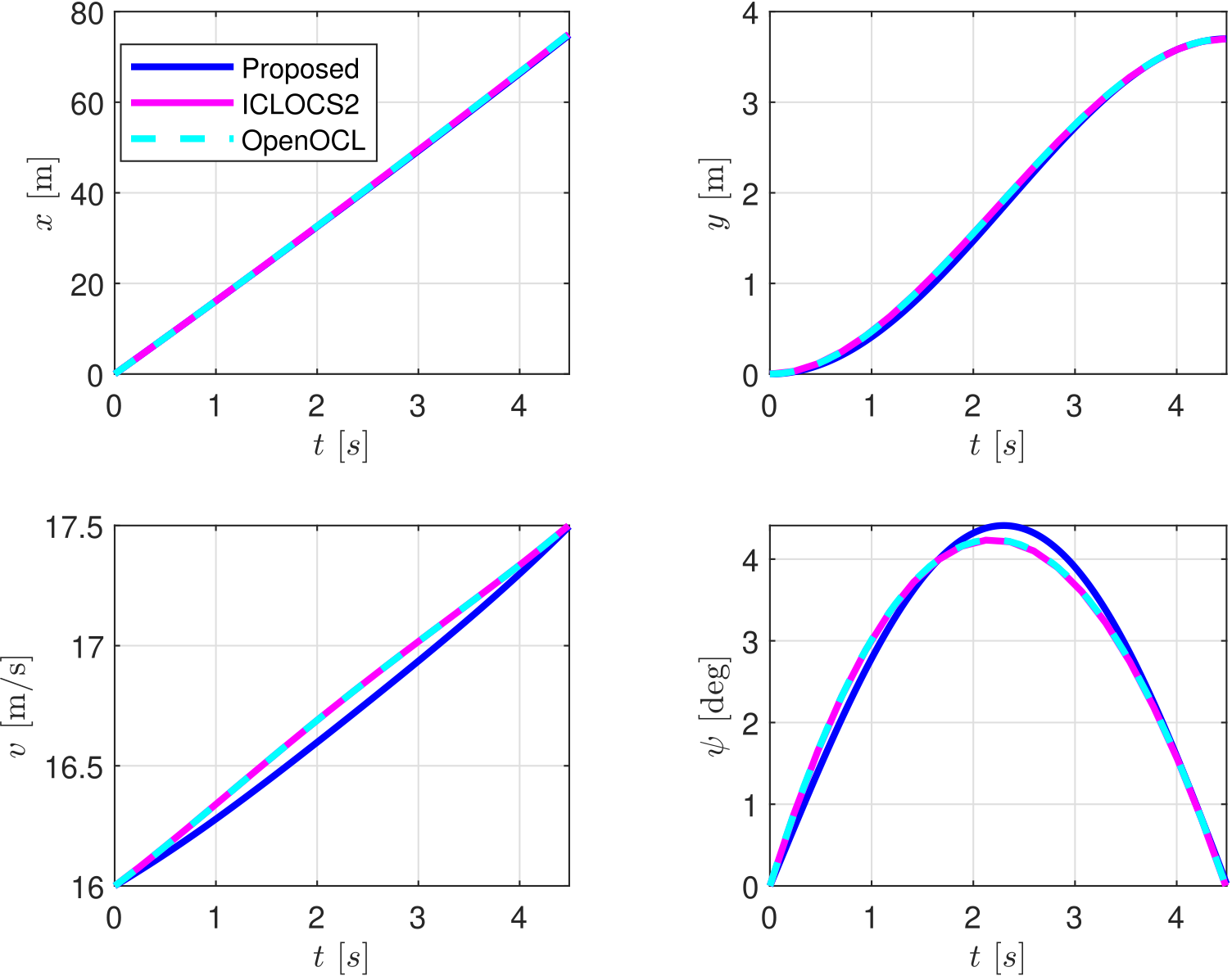}
\includegraphics[width=4.25cm]{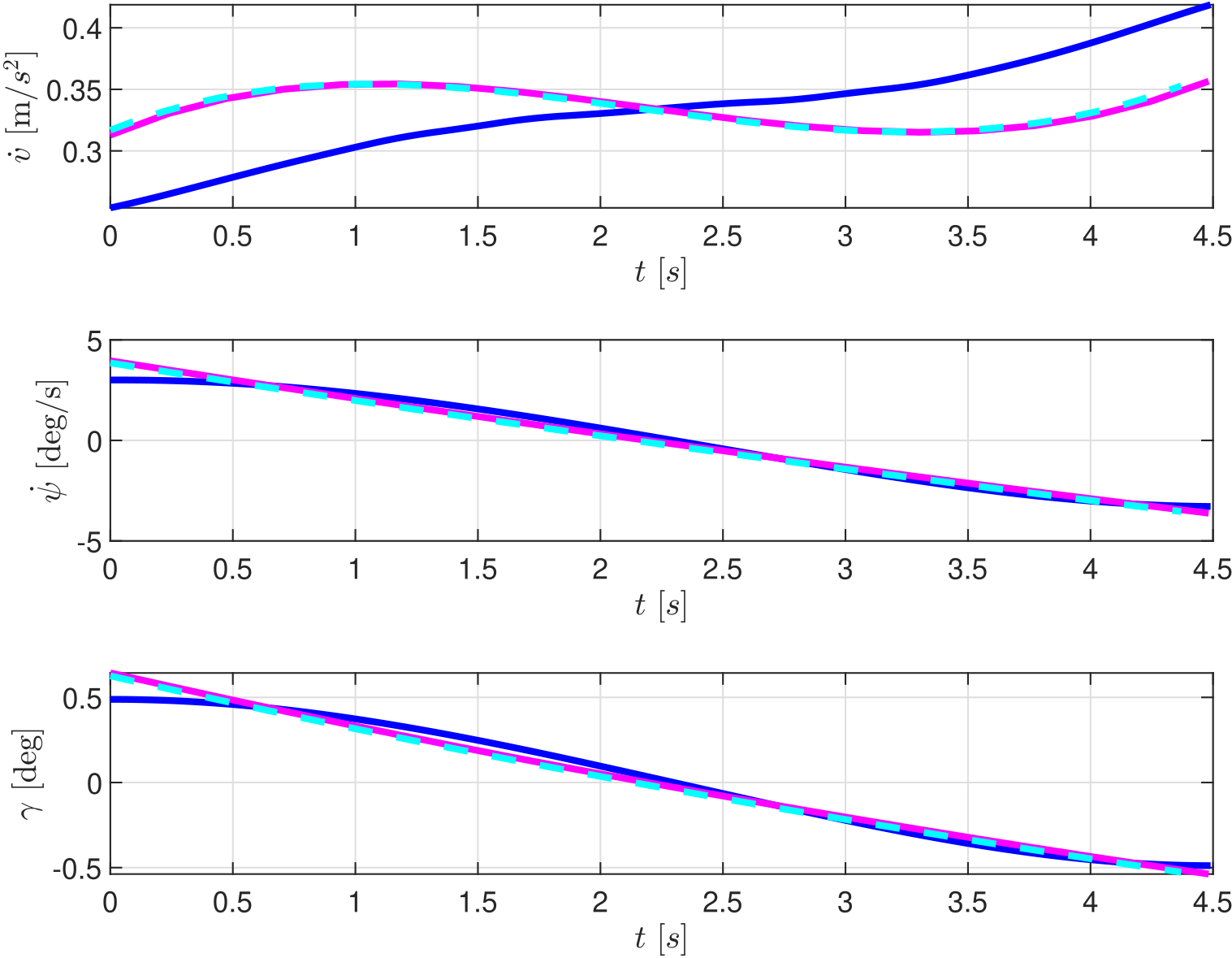}
\caption{Simulation results of Example \ref{example2}. The top figure shows the lane changing maneuver and the bottom plots show the state and input trajectories obtained with each solver.}
\label{fig:lane_change}
\end{figure}


\begin{figure*}[!ht]
\centering
\includegraphics[width=3.5cm]{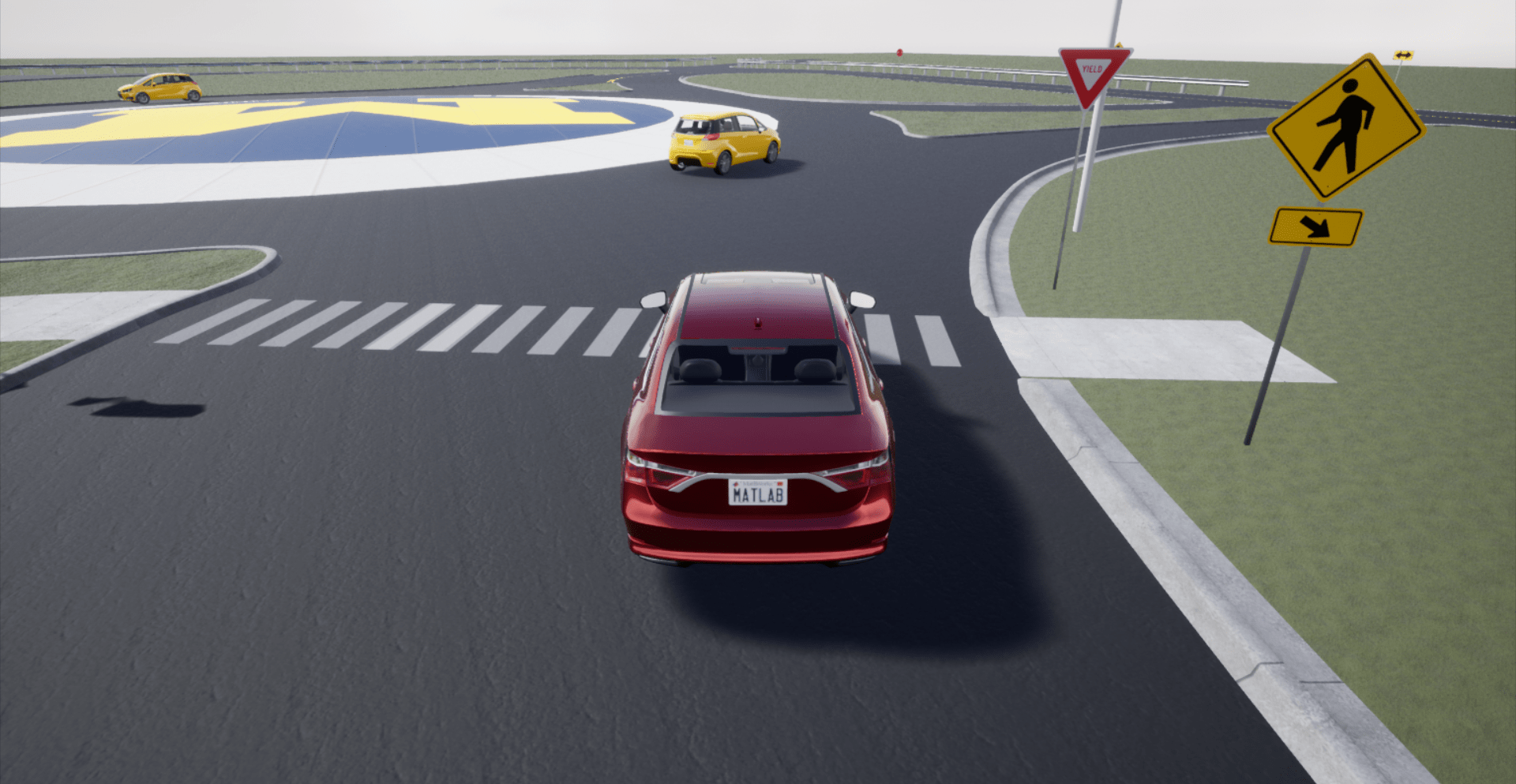}
\includegraphics[width=3.5cm]{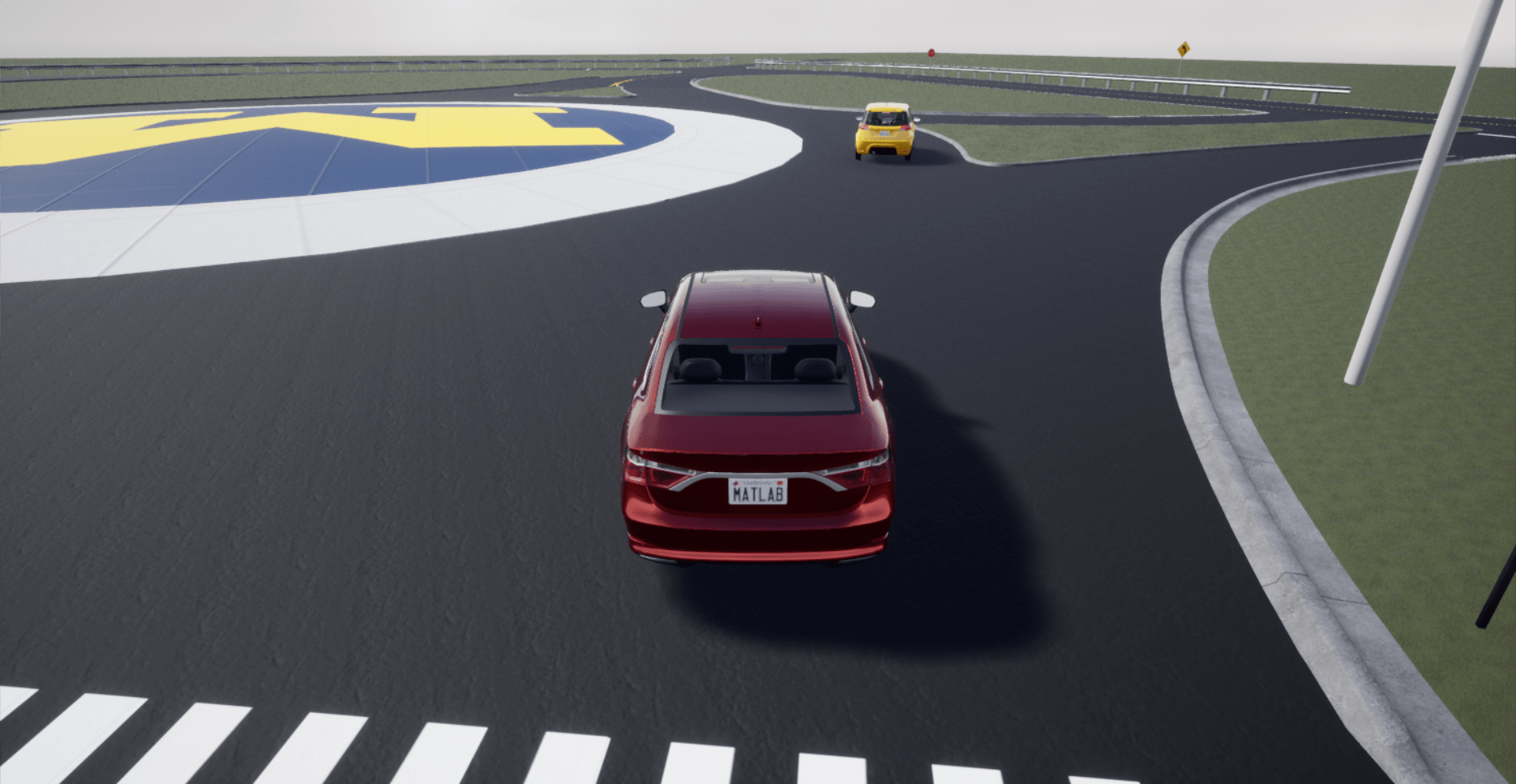}
\includegraphics[width=3.5cm]{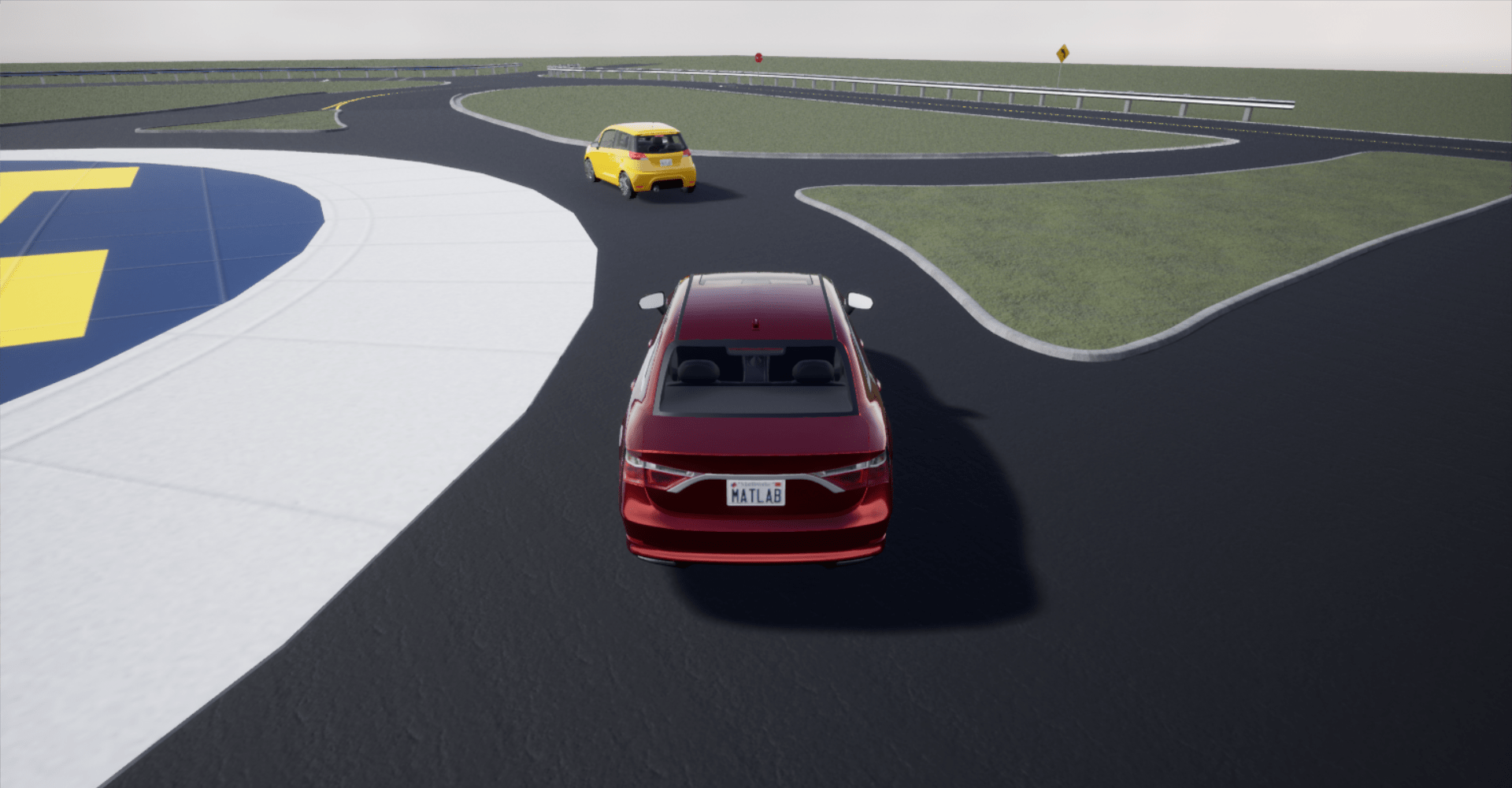}
\includegraphics[width=3.5cm]{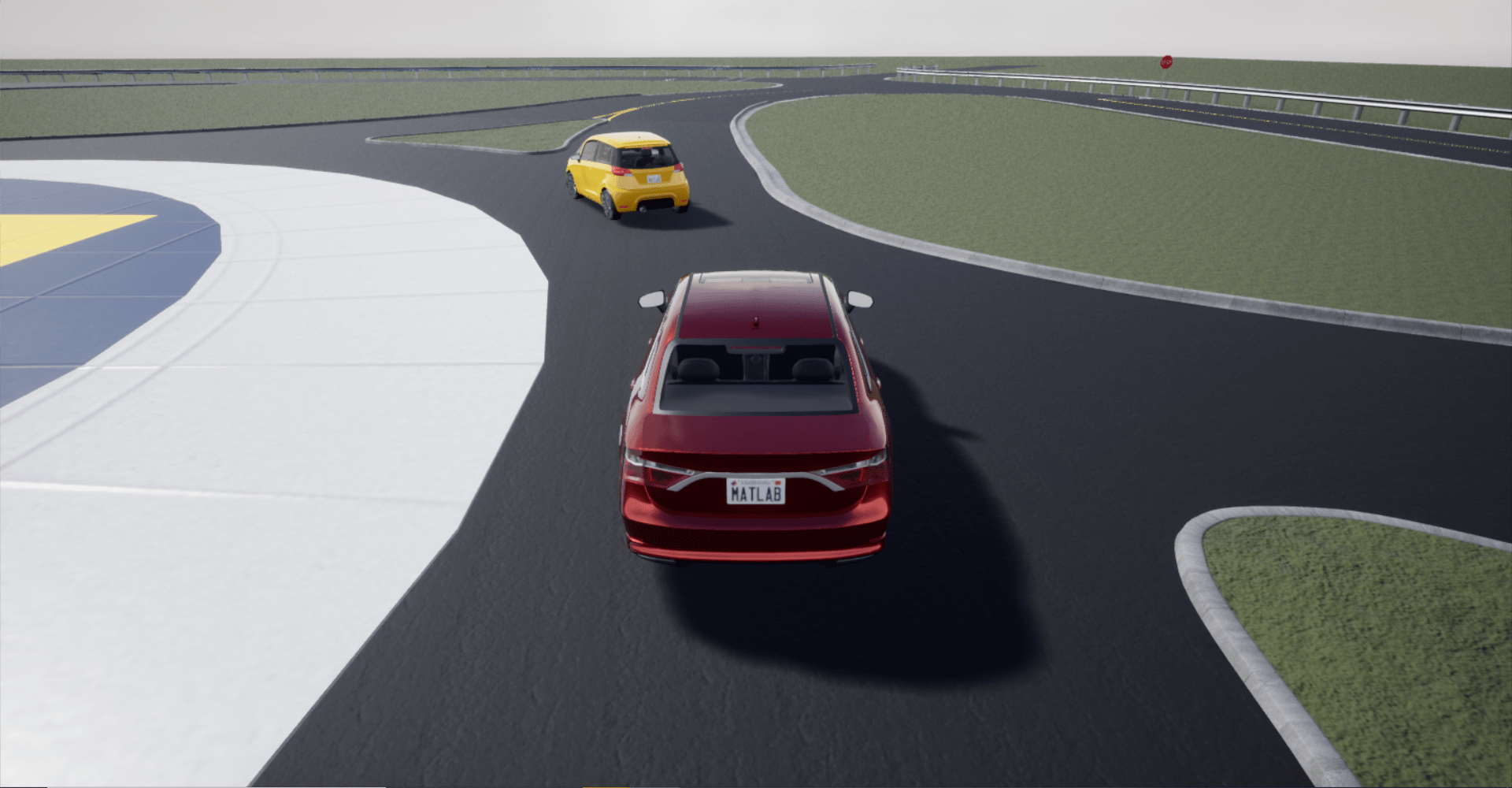}
\includegraphics[width=3.5cm]{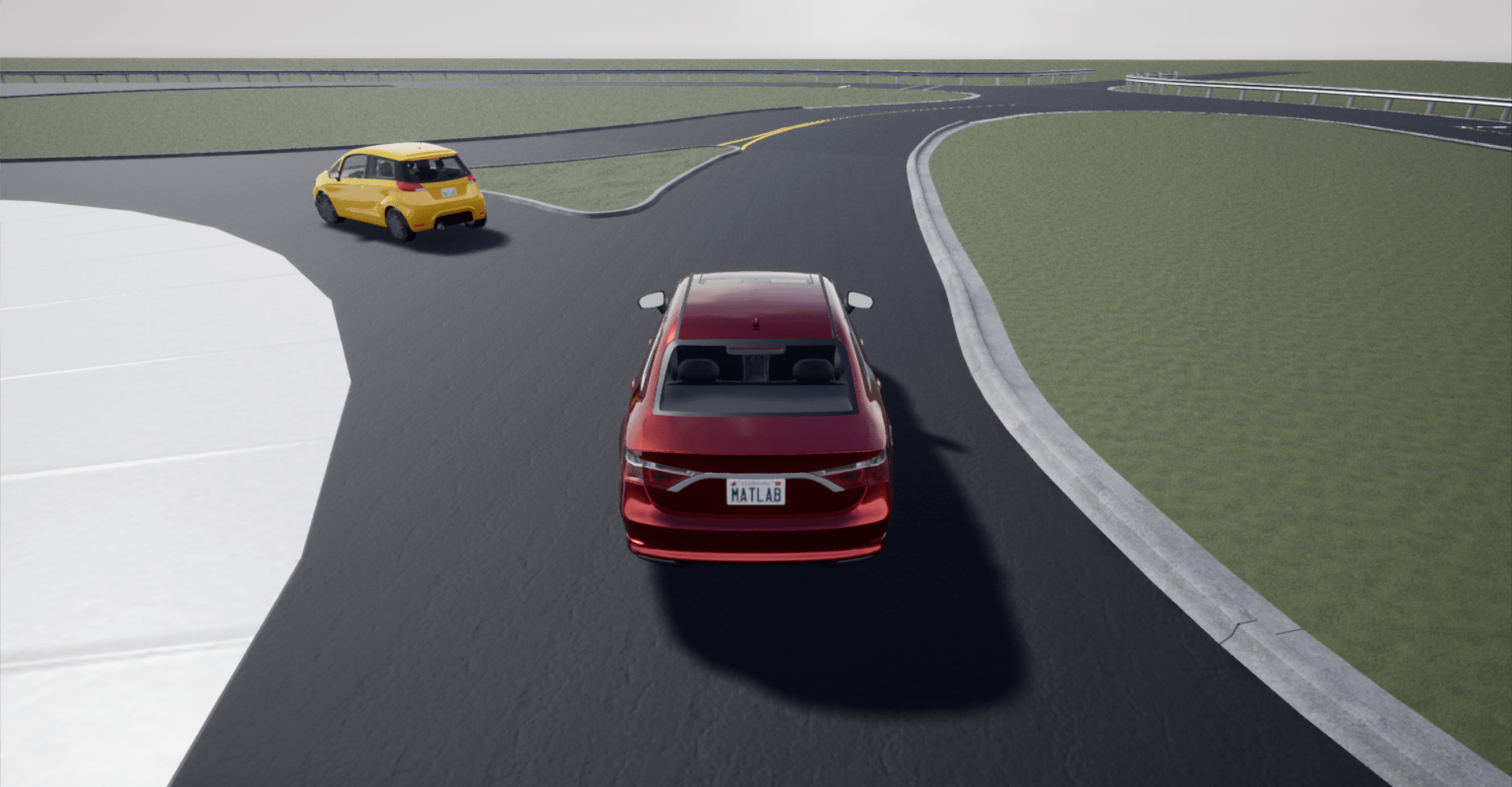}
\includegraphics[width=3.5cm]{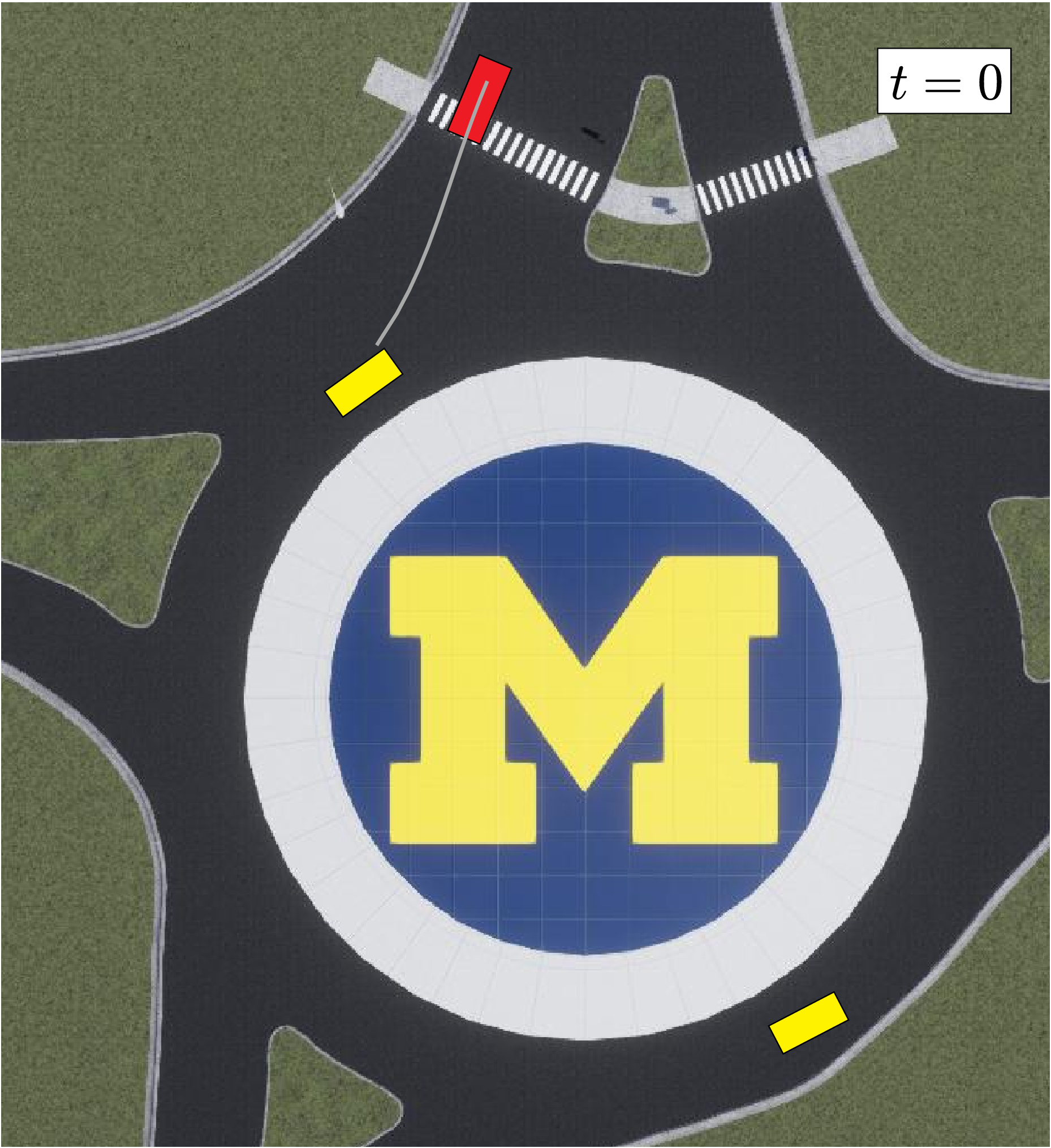}
\includegraphics[width=3.5cm]{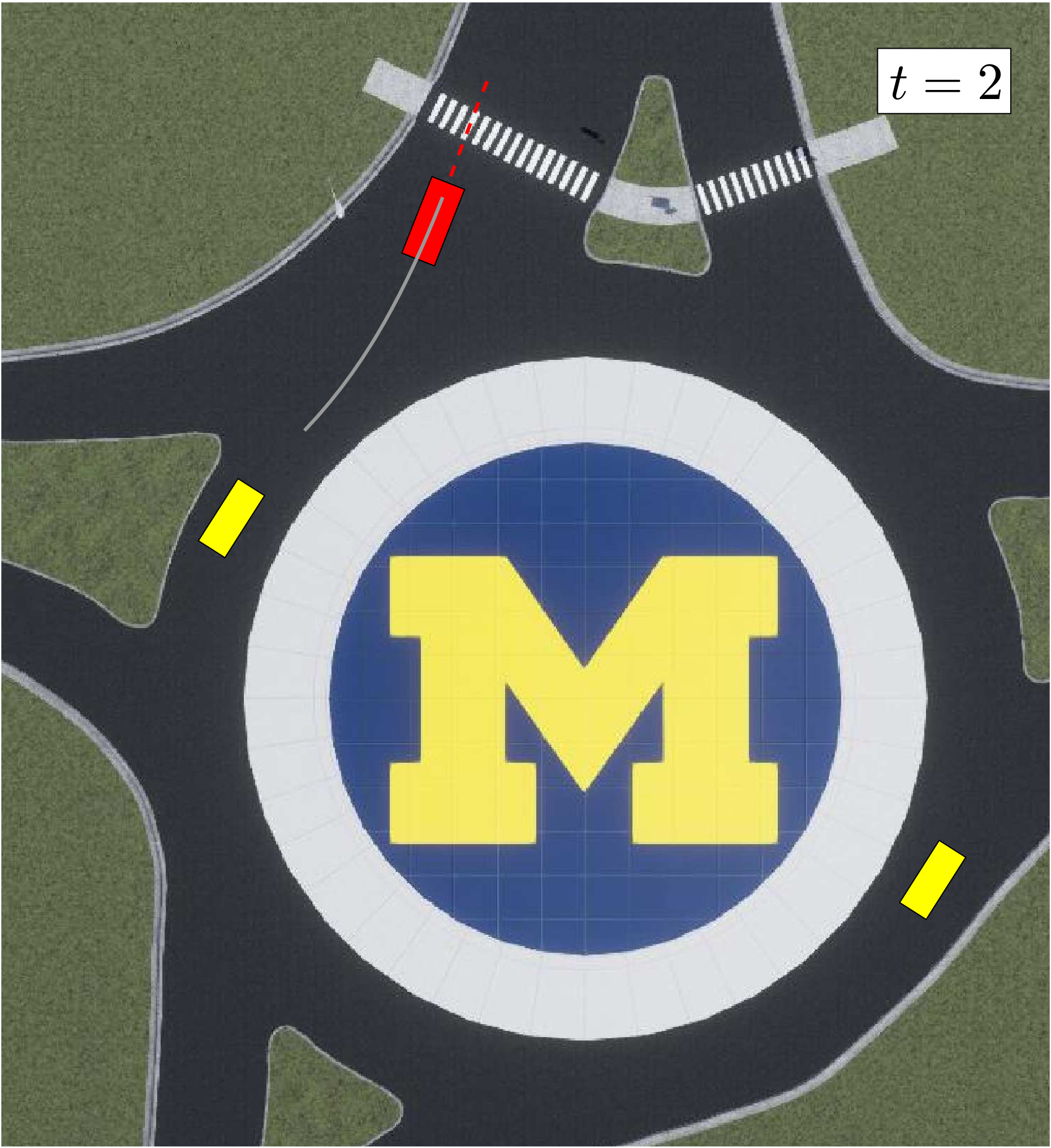}
\includegraphics[width=3.5cm]{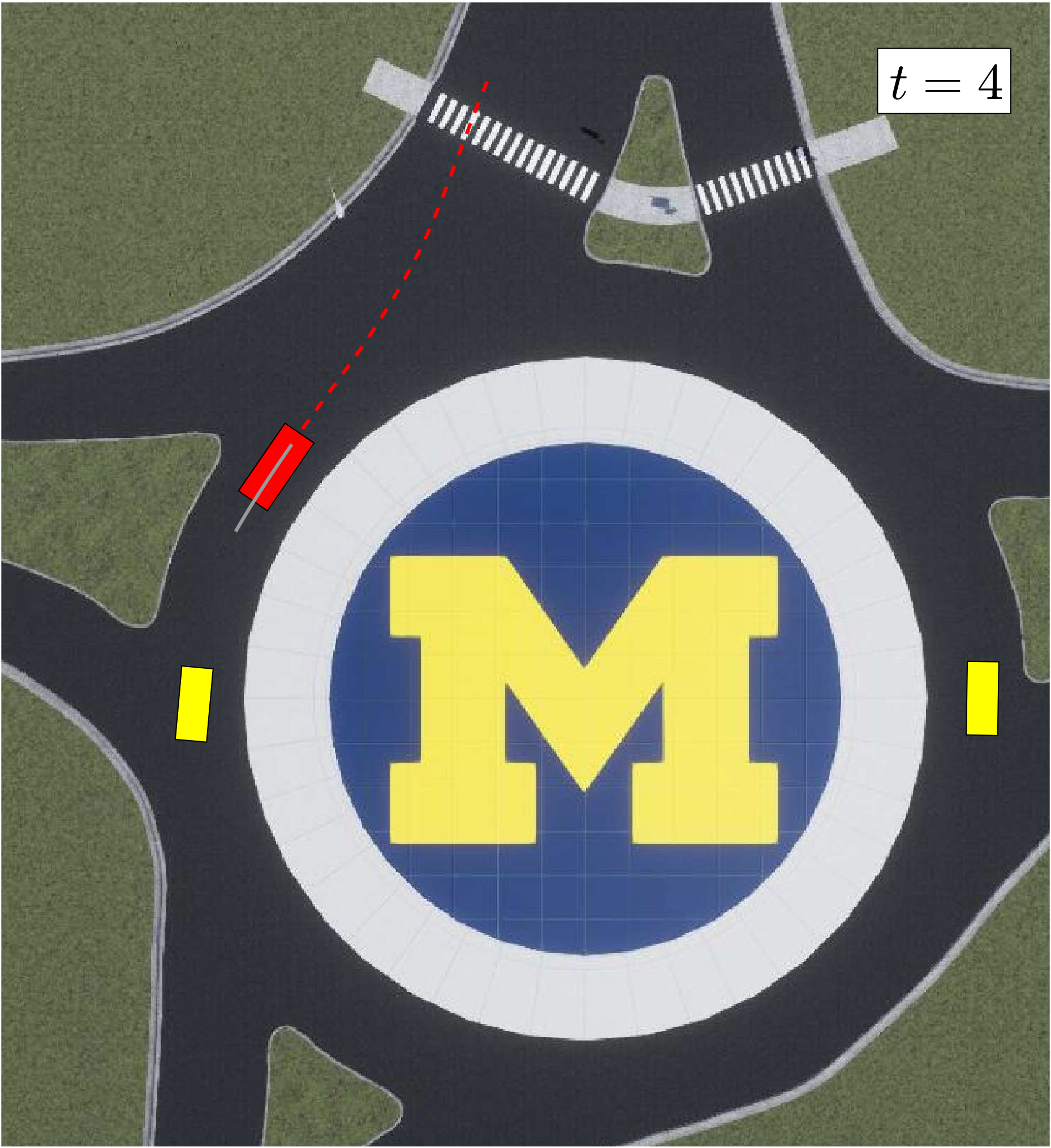}
\includegraphics[width=3.5cm]{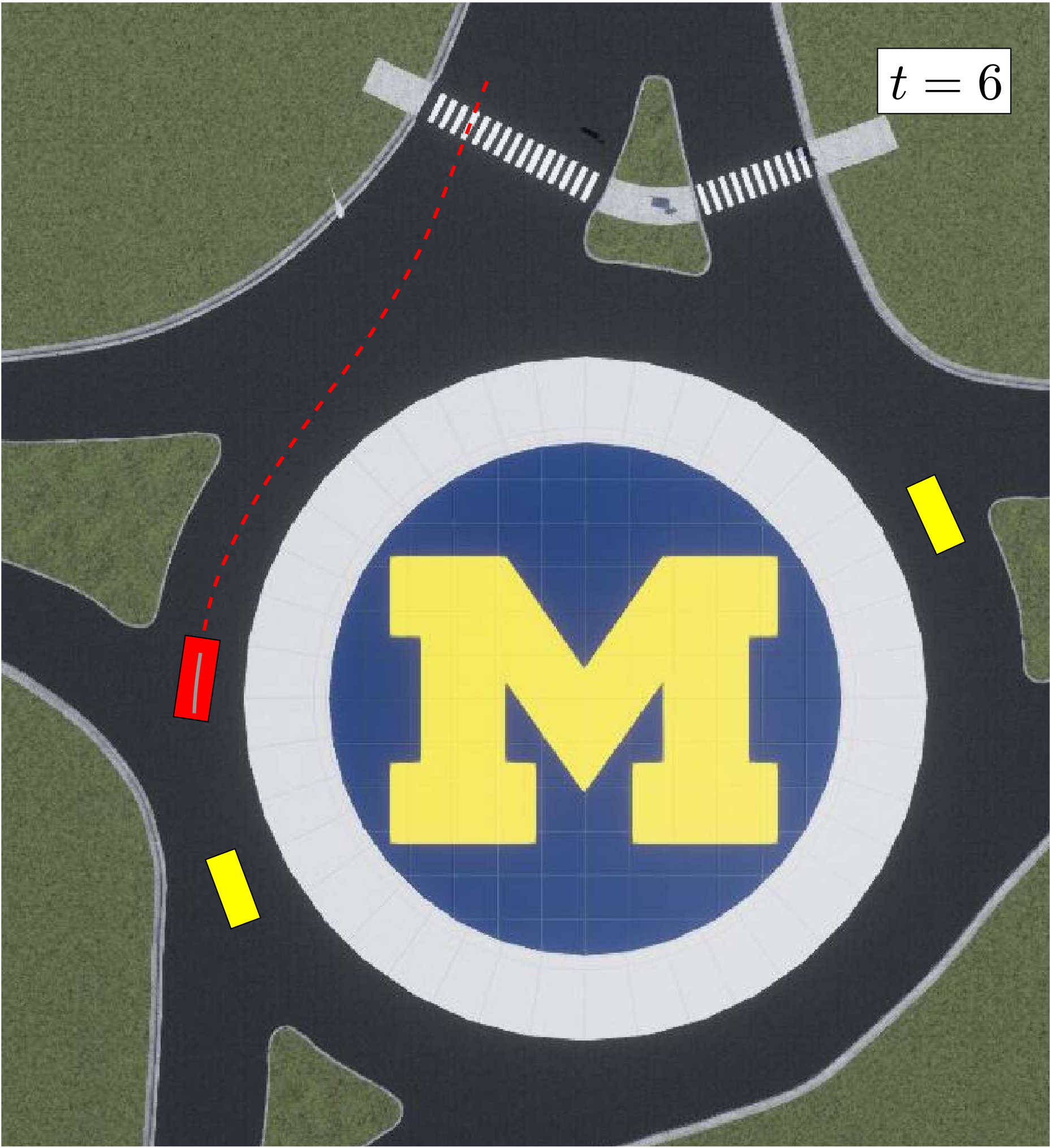}
\includegraphics[width=3.5cm]{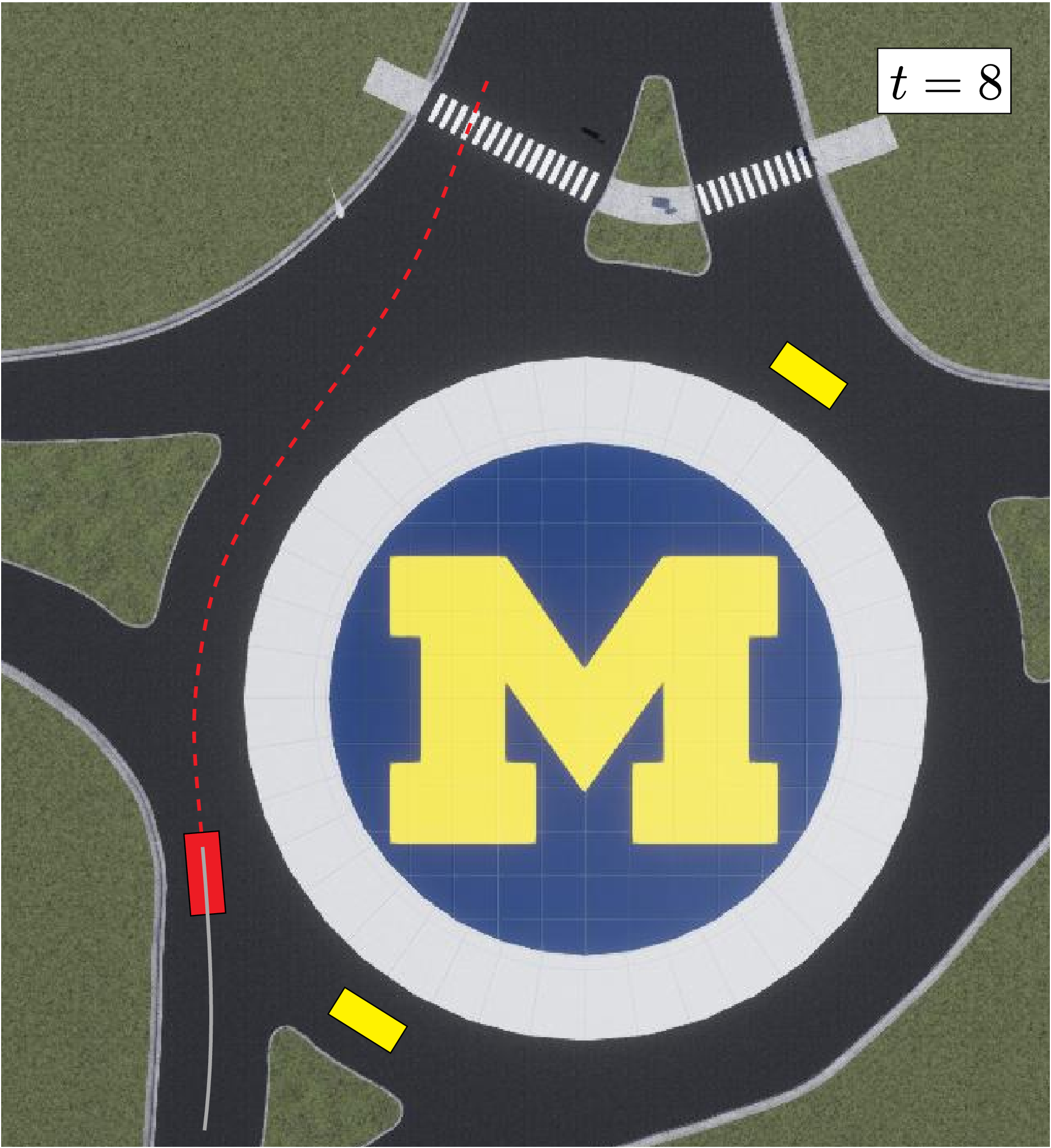}
\caption{Navigation using the proposed approach to solve \eqref{OPT-CTRL} at 25 Hz in Example \ref{example3}. The figure shows the scenario at times $t = 0, 2, 4, 6 ,8$ seconds.}
\label{fig:mcity}
\end{figure*}
\begin{example}\label{example3}
We demonstrate the real-time capabilities of the proposed approach in a simulated scenario located at Mcity's main roundabout.
The vehicle begins from rest at the roundabout entrance and must adjust to existing traffic and take the roundabout's second exit. We enforce the constraint $v(t) \leq \overline{v} = 11.176$ m/s to conform to typical 25 mph speed limits in residential areas. We also consider two actors driving around the roundabout with constant speed of $5$ m/s.
The adjustment to traffic is done by simple behavioral logic as follows: If the planned trajectory, with desired final position $\vect{r}_f$ located 15 meters ahead on the road, is obstacle-free, it is used; otherwise, if it collides with the vehicle in front, we adjust the endpoint of the trajectory to be $8$ meters behind the leading vehicle (but still on the road's center line) and enforce a final speed $v_f = 2.5 $ m/s.
The proposed framework is solved at 25 Hz and, for each solution, the corresponding control is applied. Thus, we achieve similar behavior as that of MPC. 
The scenario is rendered in MATLAB's 3D simulation environment powered by Unreal Engine. We show snapshots of the trajectory at five different time steps in Figure \ref{fig:mcity}. The figure also shows a bird's eye view of the scenario and the planned trajectory at the current time step (gray line).
\end{example}

\section{Conclusions} \label{sec:Conclusions}
In this work we presented a sequential SOCP approach to solve the optimal control problem with a kinematic bicycle model where the state and input trajectories generated are guaranteed to satisfy the constraints in the continuous-time sense. We also compared the performance of the proposed framework to that of state-of-the-art solvers and demonstrated its efficiency in a simulated scenario. In future work, we will leverage the safety guarantees of the proposed framework to achieve safety both at a higher level in the form of objective success and at a lower level in the form of tracking safety guarantees.

\bibliographystyle{IEEEtran}
\bibliography{IEEEabrv,mybib}

\end{document}